\def\vph{\varphi}
\def\VV{\mathcal{V}}
\def\HH{\mathcal{H}}
\def\EE{\mathcal{E}}
\def\G{\mathcal{G}}
\def\chio{\chi_o}
\def\mad{\mathrm{mad}}
\def\Mad{\mathrm{Mad}}
\newtheorem{thmA}{Theorem}
\newtheorem*{thmTa}{Theorem 2(a)}
\newtheorem*{thmTb}{Theorem 2(b)}
\newtheorem{lem}{Lemma}
\newtheorem{prop}[lem]{Proposition}
\newtheorem{thm}{Theorem}
\newtheorem{conj}{Conjecture}
\newtheorem{cor}[lem]{Corollary}
\theoremstyle{definition}
\newtheorem{rem}[lem]{Remark}
\newcommand{\aside}[1]{\marginnote{\scriptsize{#1}}[0cm]}
\newcommand\Emph[1]{\emph{#1}\aside{#1}}
\author{Daniel W. Cranston\thanks{%
Department of Computer Science, Virginia Commonwealth
University, Richmond, VA, USA;
\texttt{dcranston@vcu.edu}
}}
\begin{document}
\title{Odd Colorings of Sparse Graphs}
\maketitle
\abstract{
A proper coloring of a graph is called \emph{odd} if every
non-isolated vertex has some color that appears an odd number of times on its
neighborhood.  The smallest number of colors that admits an odd coloring of a
graph $G$ is denoted $\chio(G)$.  
This notion was introduced by Petru\v{s}evski and \v{S}krekovski, who
proved that if $G$ is planar then $\chio(G)\le 9$; they also conjectured that
$\chio(G)\le 5$.  
For a positive real number $\alpha$, we consider the
maximum value of $\chio(G)$ over all graphs $G$ with maximum average degree 
less than $\alpha$; we denote this value by $\chio(\G_{\alpha})$.  
We note that $\chio(\G_{\alpha})$ is undefined for all $\alpha\ge 4$.  In
contrast, for each $\alpha\in[0,4)$, we give a (nearly sharp) upper bound on
$\chio(\G_{\alpha})$.  
Finally, we 
prove $\chio(\G_{20/7})= 5$ and 
$\chio(\G_3)= 6$.  Both of these results are sharp.

}
\bigskip

\section{Introduction}

A proper coloring of a graph is \emph{odd}\aside{odd coloring} if every
non-isolated vertex has some color that appears an odd number of times on its
neighborhood.  The smallest number of colors that admits an odd coloring of a
graph $G$ is denoted \Emph{$\chio(G)$}.  
Clearly, $\chio(G)\le |V(G)|$, since we can simply color each vertex with its own color. 
This notion was introduced by
Petru\v{s}evski and \v{S}krekovski~\cite{PS}, who 
proved that if $G$ is planar then
$\chio(G)\le 9$; they also conjectured that $\chio(G)\le 5$.

Odd coloring is motivated by various types of hypergraph coloring.
A hypergraph $\HH$ consists of a set $\VV$ of vertices and a set $\EE$ of
(hyper)edges, each of which consists of an arbitrary set of vertices in $\VV$.
Most varieties of hypergraph coloring assign colors (integers in $\{1,\ldots,k\}$)
to the elements of $\VV$ subject to certain constraints.
Standard hypergraph coloring requires only that no edge in $\EE$ is
monochromatic.  Even et al.~\cite{ELRS} introduced \emph{conflict-free} coloring,
which requires that each edge in $\EE$ has some color that appears exactly once on
its vertices.  This topic has been widely
studied~\cite{CT,GST,KKL,PT,smorodinsky}.
Cheilaris et al.~\cite{CKP} studied \emph{odd} coloring of hypergraphs, which
requires that each edge in $\EE$ has some color that appears an odd number of times
on its vertices.  Aspects of this problem have been studied in~\cite{BMWW1,
BMWW2, FG}. For graphs, Cheilaris~\cite{cheilaris-diss} studied conflict-free
colorings with respect to open neighborhoods.  That is, for each vertex $v$
some color appears exactly once on $N(v)$.  Finally, Petru\v{s}evski and
\v{S}krekovski~\cite{PS} studied odd colorings of
graphs, which are proper colorings where each vertex $v$ has some color that appears
an odd number of times on $N(v)$.  It is this parameter that we consider in the
present short note.

The average degree of a graph $H$ is $2|E(H)|/|V(H)|$.  The \emph{maximum
average degree} of a graph $G$, denoted \Emph{$\mad(G)$}, is the maximum, over all
non-empty subgraphs $H$ of $G$, of the average degree of $H$.  That is,
$\mad(G):=\max_{H\subseteq G}2|E(H)|/|V(H)|$.
For each positive real number $\alpha$, let $\G_{\alpha}$ denote the family of
graphs $G$ with $\mad(G)<\alpha$.
We denote by $\chio(\G_{\alpha})$\aside{$\G_{\alpha}$, $\chio(\G_{\alpha})$} the 
maximum value of $\chio(G)$ over all $G\in \G_{\alpha}$.
The focus of this paper is bounding $\chio(\G_{\alpha})$ for various values of
$\alpha$.
We observe that $\chio(\G_{\alpha})$ is undefined for all $\alpha\ge 4$.  
That is, there exists a sequence of graphs $G_n$ such that $\chio(G_n)=n$ and
$\mad(G_n)<4$ for all $n$.
In contrast, for each $\alpha\in[0,4)$, we give a (nearly sharp) upper bound on
$\chio(\G_{\alpha})$.  We have two main results.

\begin{thmA}
Fix $\epsilon$ such that $0<\epsilon\le 8/5$.
If $\mad(G)\le 4-\epsilon$, then $\chio(G)\le \lfloor 8/\epsilon\rfloor+2$.
As $\epsilon\to 0$, infinitely often there exists $G_{\epsilon}$ such that
$\mad(G_{\epsilon})=4-\epsilon$ and $\chio(G_{\epsilon})=\lfloor
8/\epsilon\rfloor-1$.
\end{thmA}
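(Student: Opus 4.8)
Write $k:=\lfloor 8/\epsilon\rfloor+2$. The statement has two independent halves. For the upper bound I would run the standard \emph{minimal-counterexample-plus-discharging} argument; for the sharpness statement I would exhibit an explicit family obtained from cliques by subdividing every edge.

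\textbf{Upper bound.} Suppose the bound fails and let $G$ be a counterexample minimizing $|V(G)|$, breaking ties by $|E(G)|$; since $\chio$ of a graph is the maximum of $\chio$ over its components and $\mad$ does not increase when passing to a subgraph, we may assume $G$ is connected, and no proper subgraph of $G$ is a counterexample. The heart of the argument is a short list of \emph{reducible configurations}, all built from vertices of small degree: a vertex of small degree whose neighbors all have large degree; a $2$-vertex adjacent to another $2$-vertex beyond a bounded thread length; and similar local patterns. For each, the plan is: delete it (or contract a thread of $2$-vertices to an edge), $k$-color what remains by minimality, and recolor the deleted part. Two features special to odd coloring drive this. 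First, a $2$-vertex is odd exactly when its two neighbors get distinct colors, so a maximal thread of $2$-vertices behaves like an ordinary properly colored path and only its two endpoints impose extra constraints. Second, a vertex $u$ remains odd as long as we do not flip the parity of \emph{one} color that $u$ currently relies on; hence each already-colored neighbor of a re-colored vertex $w$ forbids at most one color for $w$ beyond properness, and if $w$ ends with an odd number of "safely colored" neighbors it is odd automatically by a parity count. Counting forbidden colors against the budget $k$ pins down exactly which small-degree patterns are reducible.

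\textbf{Discharging.} Assign each vertex $v$ charge $\deg(v)$, so the total is $2|E(G)|\le(4-\epsilon)|V(G)|$. I would then redistribute charge, sending it from high-degree vertices along their incident threads and to other deficient low-degree neighbors, so that every vertex finishes with charge at least $4-\epsilon$; summing contradicts $2|E(G)|\le(4-\epsilon)|V(G)|$ (with the usual short argument excluding the equality case, e.g.\ via a vertex of maximum degree, which exists since $G$ is connected and nonempty). A $2$-vertex must gain $2-\epsilon$, a $3$-vertex must gain $1-\epsilon$, and a vertex of degree $d\ge 4$ starts with surplus $d-4+\epsilon$. The role of the reducibility lemmas is to guarantee that every deficient vertex is adjacent to—or joined by a short thread to—vertices of degree roughly $8/\epsilon$, which is precisely the degree at which a donor can support its full complement of $2$-vertex neighbors and still retain charge $4-\epsilon$; this is where the value $k=\lfloor 8/\epsilon\rfloor+2$ is forced, and making these two thresholds coincide exactly is the part I expect to be the main obstacle, since oddness (unlike properness) is a non-local constraint and the "repair" near a deleted vertex can propagate.

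\textbf{Sharpness.} For the lower bound, fix an even integer $m\ge 4$ and let $G_\epsilon$ be $K_m$ with every edge subdivided exactly once. Any subgraph is only improved, as a witness to $\mad$, by adjoining a subdivision vertex together with both of its neighbors, so the densest subgraph is $G_\epsilon$ itself and $\mad(G_\epsilon)=4(m-1)/(m+1)=4-8/(m+1)$. Moreover $\chio(G_\epsilon)=m$: in any odd coloring the subdivision vertex on edge $u_iu_j$ is odd only if $u_i,u_j$ receive different colors, so the $m$ original vertices use $m$ distinct colors; conversely, color the original vertices $1,\dots,m$ and color each subdivision vertex with any color outside its two forbidden ones (possible since $m\ge 3$)—each subdivision vertex is then odd because its two neighbors differ, and each original vertex is odd because it has $m-1$ neighbors, an odd number, so some color must occur an odd number of times on its neighborhood. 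Setting $\epsilon:=8/(m+1)$ gives $\mad(G_\epsilon)=4-\epsilon$ and $m=8/\epsilon-1=\lfloor 8/\epsilon\rfloor-1$; letting $m$ range over the even integers at least $4$ gives infinitely many values of $\epsilon\in(0,8/5]$ tending to $0$, as required.
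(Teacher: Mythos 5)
Your sharpness construction is essentially the paper's: the paper uses $K_n^*$ ($K_n$ with every edge subdivided) for all $n$, computes $\mad(K_n^*)=4-8/(n+1)$ via a fractional orientation, and notes $\chio(K_n^*)=n=\lfloor 8/\epsilon\rfloor-1$ at $\epsilon=8/(n+1)$. Your restriction to even $m$ is harmless (it still gives infinitely many $\epsilon\to 0$) and actually buys you a cleaner verification that the branch vertices are odd, since they then have an odd number of neighbors; your density argument for $\mad$ is terser than the paper's orientation argument but fixable. This half is fine.

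The upper bound half, however, has a genuine gap, and you have named it yourself: ``making these two thresholds coincide exactly is the part I expect to be the main obstacle.'' That coincidence is the entire content of the theorem, and your proposal does not supply it. The paper's resolution is concrete and you would need something equivalent. After discarding $1$-vertices, $3$-vertices (reducible outright since $k\ge 7$ and a $3$-vertex must avoid at most $6$ colors), and adjacent $2$-vertices (so there are no threads of length greater than one --- your ``bounded thread length'' configuration is not needed), one runs a single discharging rule in which each $2$-vertex takes $x:=1-\epsilon/2$ from each neighbor. Since every $2$-vertex then holds exactly $4-\epsilon$, the hypothesis $\mad(G)\le 4-\epsilon$ forces the existence of a vertex $v$ with $d(v)-x\,d_2(v)\le 2+2x$, where $d_2(v)$ is its number of $2$-neighbors. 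This single inequality does double duty: it bounds the degree, $d(v)\le (2+2x)/(1-x)=8/\epsilon-2$, and it bounds the list of forbidden colors when one deletes $v$ together with its $2$-neighbors, recolors by induction, and puts $v$ back. Namely $v$ must avoid at most $2d(v)-d_2(v)$ colors (two per remaining neighbor, one per deleted $2$-neighbor), and the chain $1+2d(v)-d_2(v)<1+d(v)+(d(v)-x\,d_2(v))\le 3+d(v)+2x<d(v)+5$, combined with integrality and $d(v)\le\lfloor 8/\epsilon\rfloor-2$, gives exactly $k=\lfloor 8/\epsilon\rfloor+2$. Without this (or an equivalent) computation --- in particular, without specifying the charge $x$ as a function of $\epsilon$ and using the \emph{same} inequality both to find the light vertex and to count forbidden colors --- your outline does not establish the stated bound.
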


When $\epsilon\in\{1,8/7\}$, we prove sharper upper bounds on
$\chio(G_{4-\epsilon})$.

\begin{thmA}
Fix a graph $G$.  (a) If $\mad(G)<3$, then $\chio(G)\le 6$;
and (b) if $\mad(G)<20/7$, then $\chio(G)\le 5$. 
Furthermore, neither of these upper bounds on $\chio(G)$ can be decreased, and
neither of the inequalities can be weakened to allow equality.
\end{thmA}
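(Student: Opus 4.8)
The plan is to establish the two upper bounds by the discharging method and the four sharpness assertions by exhibiting explicit graphs. I will carry out (a) in detail; part (b) follows the same template with $20/7$ in place of $3$ and with $5$ colours in place of $6$, the extra slack of the stronger hypothesis $\mad<20/7$ being spent on a more delicate color count.

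For the upper bound in (a), suppose $G$ is a counterexample minimizing $|V(G)|$: thus $\mad(G)<3$, $\chio(G)\ge 7$, but every proper subgraph of $G$ has an odd $6$-coloring. First I establish a short list of reducible configurations that $G$ cannot contain. The easy ones are that $G$ has no isolated vertex (delete it) and no vertex of degree $1$: deleting such a vertex $v$ with neighbour $u$, taking an odd $6$-coloring $c$ of $G-v$, and recoloring $v$ with a colour avoiding $c(u)$ and one fixed colour that appears an odd number of times on $N_{G-v}(u)$ leaves at least $6-2=4$ choices, none of which disturbs $u$. The substantive reducible configurations concern $2$-vertices: I expect to show that a $2$-vertex cannot be adjacent to another $2$-vertex (and more generally cannot lie in a locally sparse neighbourhood), and that a vertex of degree $d$ cannot be adjacent to too many $2$-vertices. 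Each is handled by deleting the offending low-degree vertices, $6$-coloring what remains by minimality, and extending; extending a deleted $2$-vertex $v$ with neighbours $x,y$ needs a colour avoiding $c(x)$, $c(y)$, a preserved odd colour of $x$, and one of $y$, so one must first arrange $c(x)\ne c(y)$, which is guaranteed precisely when $x$ and $y$ are adjacent or have been kept adjacent through a neighbouring deleted vertex — this dictates the shape of the configurations.

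With the structure in hand, assign to each vertex $v$ the charge $\mu(v)=\deg(v)-3$; since $\mad(G)<3$ the total charge is negative. Only $2$-vertices are in deficit (by $1$ each), $3$-vertices are neutral, and a vertex of degree $d\ge 4$ carries surplus $d-3$. The discharging rule sends a fixed amount (about $1/2$) along each edge from a high-degree vertex to a $2$-vertex; the reducibility bounds then force every $2$-vertex to have enough high-degree neighbours to reach charge $0$ while no donor is over-drawn, so every vertex ends with nonnegative charge, contradicting the negative total. For (b) the charges are $\deg(v)-20/7$, so now $3$-vertices carry a small surplus $1/7$ and one must additionally route charge through $3$-vertices; this is the main extra bookkeeping.

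For sharpness, consider for $k\ge 2$ the graph $G_k$ obtained from a $k$-element vertex set $S$ by adding, for each pair $\{s,s'\}\subseteq S$, a new vertex adjacent exactly to $s$ and $s'$. Each added vertex has degree $2$, and its oddness forces its two neighbours to get distinct colours, so any odd coloring of $G_k$ uses at least $k$ colours on $S$; hence $\chio(G_k)\ge k$. A short computation shows the densest subgraph of $G_k$ is $G_k$ itself, of average degree $4(k-1)/(k+1)<4$, so $\mad(G_k)=4(k-1)/(k+1)$. Taking $k=7$ gives $\mad(G_7)=3$ and $\chio(G_7)\ge 7$, so the hypothesis $\mad<3$ in (a) cannot be relaxed to $\mad\le 3$. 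Taking $k=6$ gives $\mad(G_6)=20/7<3$, whence $\chio(G_6)\le 6$ by part (a), and combined with $\chio(G_6)\ge 6$ this gives $\chio(G_6)=6$; so the bound $6$ in (a) is best possible, and also the hypothesis $\mad<20/7$ in (b) cannot be relaxed to $\mad\le 20/7$. Finally $C_5$ has $\mad(C_5)=2<20/7$, and in $C_5$ every vertex has exactly two neighbours whose colours must differ, so all five vertices get distinct colours and $\chio(C_5)=5$; thus the bound $5$ in (b) is best possible. (The graphs $G_7$ and $G_6$ are the instances of the extremal family behind Theorem~1 at $\epsilon=1$ and $\epsilon=8/7$.)

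The main obstacle will be the reducibility lemmas for $2$-vertices. Re-extending an odd coloring across a deleted $2$-vertex is delicate because the extension must simultaneously remain proper, give the new vertex an odd colour — which forces its two neighbours to be coloured differently, and this is not delivered by the coloring of the smaller graph — and avoid destroying the odd colours of its neighbours, which may themselves be fragile low-degree vertices. Fitting all of these constraints into only $6$ colours, and into only $5$ colours for part (b), is what forces the configurations to be chosen carefully and is the technical heart of the proof.
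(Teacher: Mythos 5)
Your sharpness arguments are correct and match the paper's: your $G_k$ is exactly the subdivided clique $K_k^*$ used in the paper, with $\mad(K_k^*)=4-8/(k+1)$ and $\chio(K_k^*)=k$; the cases $k=7$, $k=6$, together with $\chio(C_5)=5$, settle all four optimality claims just as the paper does. The gap is in the upper bounds. For part (a) you name the right framework (minimal counterexample, then discharging with $2$-vertices pulling about $1/2$ from each neighbor), but you never specify, let alone verify, the reducible configurations, and you acknowledge that this is ``the technical heart.'' The paper needs precisely: no $1$-vertex, no adjacent $2$-vertices, no $3$-vertex with a $2$-neighbor, no $4$-vertex with three or more $2$-neighbors, and no $5$-vertex with five $2$-neighbors; each is reducible with room to spare, and the charge count then closes exactly (e.g.\ a $5$-vertex with four $2$-neighbors finishes with exactly $3$). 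Without exhibiting these configurations and checking both the color counts for the extensions and the final charges, the proof of the bound $\chio(G)\le 6$ is not there.

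Two further concrete problems. First, your stated mechanism for re-coloring a deleted $2$-vertex $w$ with neighbors $x,y$ --- that $c(x)\ne c(y)$ ``is guaranteed precisely when $x$ and $y$ are adjacent'' --- is not how this works and would fail in general, since $x$ and $y$ need not be adjacent. In the paper's configurations one of the two neighbors (the central vertex $v$ of the configuration) is itself deleted and is colored \emph{first}, subject to avoiding the colors of the far neighbors $x_i$ of all of its deleted $2$-neighbors; this is what forces the two neighbors of each re-inserted $2$-vertex to receive distinct colors, and it is also why the degree of $v$ must be bounded (so that the list of forbidden colors for $v$ stays below $6$, resp.\ $5$). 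Second, part (b) is not merely part (a) ``with $20/7$ in place of $3$'': the reducible configurations change qualitatively (one needs, e.g., that no $3$-vertex has two $3^-$-neighbors, that no $4$-vertex has a $2$-neighbor with all neighbors of degree at most $3$, and that no two adjacent $4$-vertices each have three $2$-neighbors), and the discharging requires a second rule in which certain $3$- and $4$-vertices receive $1/7$ from their $3^+$-neighbors. None of this is routine bookkeeping that can be waved through by analogy, so as written the proposal establishes the sharpness statements but not the two inequalities.
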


For a proper coloring $\vph$ of some subgraph $H$ of $G$, let \Emph{$\vph_o(v)$} 
denote the unique color that appears an odd number of times in $N_H(v)$ if such
a color exists; otherwise, $\vph_o(v)$ is undefined.  Most of the rest of our
notation and definitions are standard.  But for the reader's convenience we
highlight a few terms.  The \Emph{girth} of a graph $G$ is the length of a
shortest cycle in $G$ (the girth of an acyclic graph is infinite).  A
$k$-vertex\aside{$k$/$k^+$/$k^-$-vertex} is a vertex of degree $k$.  A
$k^+$-vertex (resp.~$k^-$-vertex) is a vertex of degree at least (resp.~at
most) $k$.  A $k$-neighbor\aside{$k$/$k^+$/$k^-$-neighbor} of a vertex $v$ is a
neighbor of $v$ that is a $k$-vertex.  Both $k^+$-neighbor and $k^-$-neighbor
are defined analogously.

To close this introduction, we prove three easy results about graphs $G$ with
$\chio(G)\le 4$.

\begin{prop}
A graph $G$ has $\chio(G)=1$ if and only if $G$ has vertices but no edges.
And $G$ has $\chio(G)=2$ if and only if $G$ is bipartite
and the degree of each vertex in $G$ is either 0 or odd.
\end{prop}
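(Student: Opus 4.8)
The plan is to prove the two biconditionals by direct arguments that essentially unwind the definitions. For the first one: a single-color assignment is a proper coloring exactly when $G$ has no edges, which immediately gives the forward implication (if $\chio(G)=1$ then $G$ must be edgeless, and of course it has a vertex). Conversely, if $G$ has a vertex but no edge, then the all-$1$ coloring is proper and the oddness requirement is vacuous, since every vertex is isolated; hence $\chio(G)=1$.

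For the characterization of $\chio(G)=2$, I would argue the forward direction as follows. Fix an odd proper $2$-coloring $\vph$ with color set $\{1,2\}$. Since $\chio(G)\ne1$, the graph $G$ has an edge; since $G$ is properly $2$-colorable, it is bipartite, and the color classes form a bipartition $(X,Y)$. Now take any non-isolated vertex $v$, say with $\vph(v)=1$. By properness every neighbor of $v$ has color $2$, so color $1$ appears $0$ times (an even number) on $N(v)$, while color $2$ appears exactly $\deg(v)$ times. For $\vph_o(v)$ to be defined we therefore need $\deg(v)$ to be odd. Hence every vertex of $G$ has degree $0$ or odd, as claimed.

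For the backward direction, suppose $G$ is bipartite with bipartition $(X,Y)$ and every vertex has degree $0$ or odd. Color $X$ with $1$ and $Y$ with $2$; this is a proper $2$-coloring. Any non-isolated vertex $v$ lies in one part and all $\deg(v)$ of its neighbors lie in the other part, so they all receive the same color, which therefore appears an odd number of times (namely $\deg(v)$ times) on $N(v)$. So this coloring is odd and $\chio(G)\le2$; if moreover $G$ has an edge, then $\chio(G)\ge2$ by the first part, so $\chio(G)=2$.

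There is no real obstacle here; the whole argument is routine. The only places calling for a little care are the two degenerate cases (an edgeless graph is vacuously bipartite with all degrees $0$ or odd, yet has $\chio=1$, so the $\chio(G)=2$ characterization is to be read with the tacit hypothesis that $G$ has an edge, which is forced by $\chio(G)=2\ne1$), and the structural observation in the forward direction that in a proper $2$-coloring a vertex sees only the opposite color on its neighborhood, which is what makes ``some color occurs an odd number of times'' equivalent to ``the degree is odd.''
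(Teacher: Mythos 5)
Your proof is correct and follows essentially the same route as the paper's: both reduce the $\chio(G)=2$ case to the observation that in a proper $2$-coloring of a bipartite graph the neighborhood of any vertex is monochromatic, so oddness is equivalent to all positive degrees being odd (you argue the forward direction directly from an assumed odd $2$-coloring, while the paper phrases it contrapositively via the uniqueness of the $2$-coloring on each component, but the content is identical). Your remark about the edgeless degenerate case is a fair point of care that the paper glosses over, but it does not change the substance.
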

\begin{proof}
The first statement is obvious, so now we prove the second.
If $G$ is bipartite, then $G$ has a proper 2-coloring $\vph$.  If the degree of
each vertex is either 0 or odd, then $\vph$ is also an odd 2-coloring of $G$.
If $G$ is not bipartite, then $G$ has no proper 2-coloring, so clearly
$\chio(G)>2$.  Suppose instead $G$ is bipartite, but some vertex $v$ has
positive even degree.  Now the component of $G$ containing $v$ has only a single
2-coloring (up to permuting colors), but this 2-coloring is not odd since
 $N(v)$ has only a single color, which appears an even number of
times.
\end{proof}

\begin{prop}
If $3|n$, then $\chio(C_n)=3$.  If $n=5$, then $\chio(C_n)=5$.  Otherwise, $\chio(C_n)=4$.
\label{cycle-prop}
\end{prop}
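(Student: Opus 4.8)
The plan is to reduce the computation of $\chio(C_n)$ to finding $\chi(C_n^2)$, the chromatic number of the square of the cycle. Since every vertex of $C_n$ has degree $2$, a vertex $v$ with $N(v)=\{a,b\}$ has a color appearing an odd number of times on $N(v)$ if and only if $\vph(a)\ne\vph(b)$ (if $\vph(a)=\vph(b)$, that color appears twice and no color appears once). Hence, writing $C_n$ as $v_1v_2\cdots v_nv_1$ with indices read modulo $n$, a coloring $\vph$ is odd exactly when it is proper and $\vph(v_{i-1})\ne\vph(v_{i+1})$ for every $i$; equivalently, every three consecutive vertices get three distinct colors, i.e.\ $\vph$ is a proper coloring of $C_n^2$. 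Thus $\chio(C_n)=\chi(C_n^2)$. For $n\le 5$ any two vertices of $C_n$ lie within distance $2$, so $C_3^2=K_3$, $C_4^2=K_4$, and $C_5^2=K_5$, giving $\chio(C_n)=3,4,5$ respectively, matching the three clauses of the statement ($n=3$: $3\mid n$; $n=5$; $n=4$: otherwise).

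Now assume $n\ge 6$, so that $C_n^2$ is connected and $4$-regular. If $3\mid n$, then three consecutive vertices form a triangle in $C_n^2$, so $\chi(C_n^2)\ge 3$, while the periodic coloring $1,2,3,1,2,3,\ldots$ (legal precisely because $3\mid n$) shows $\chi(C_n^2)\le 3$; hence $\chio(C_n)=3$. If $3\nmid n$, then $\chi(C_n^2)\le 4$: this follows from Brooks' theorem, since for $n\ge 6$ the graph $C_n^2$ is connected and $4$-regular and is neither $K_5$ nor an odd cycle. Alternatively, one can exhibit an explicit $4$-coloring by patching the periodic pattern: when $n\equiv 1\pmod 3$, color $v_1,\ldots,v_{n-1}$ periodically by $1,2,3,1,2,3,\ldots$ and set $\vph(v_n)=4$; when $n\equiv 2\pmod 3$ (so $n\ge 8$), color $v_1,\ldots,v_{n-5}$ periodically and set $(\vph(v_{n-4}),\ldots,\vph(v_n))=(1,4,2,3,4)$; in each case one checks directly that the few consecutive triples not lying entirely inside the periodic block are rainbow.

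It remains to prove the matching lower bound $\chi(C_n^2)\ge 4$ when $n\ge 6$ and $3\nmid n$, which is the heart of the argument. Suppose $\vph$ were a proper $3$-coloring of $C_n^2$. For each $i$, both $\{v_{i-1},v_i,v_{i+1}\}$ and $\{v_i,v_{i+1},v_{i+2}\}$ induce triangles in $C_n^2$ and so are rainbow; hence $\vph(v_{i+2})$ is the unique color different from $\vph(v_i)$ and $\vph(v_{i+1})$, namely $\vph(v_{i-1})$. Therefore $\vph(v_j)=\vph(v_{j+3})$ for all $j$, and since $\gcd(3,n)=1$ the indices $1,4,7,\ldots$ (taken modulo $n$) run over every vertex, so $\vph$ is constant — contradicting that $C_n$ has an edge. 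Thus $\chio(C_n)=\chi(C_n^2)=4$, completing the proof. The only step that needs genuine care is this forcing/periodicity argument (together with verifying the patched colorings, if one wants to avoid invoking Brooks' theorem); the identifications $C_n^2\cong K_3,K_4,K_5$ for small $n$ and the periodic coloring when $3\mid n$ are routine.
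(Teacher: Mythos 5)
Your proof is correct, and it takes a genuinely different route from the paper's. The key move is your opening observation that, because every vertex of $C_n$ has degree $2$, an odd coloring of $C_n$ is exactly a proper coloring of the square $C_n^2$, so $\chio(C_n)=\chi(C_n^2)$. This buys you two things the paper's direct argument does not: the case $n=5$ becomes immediate ($C_5^2=K_5$, hence $\chio(C_5)=5$), whereas the paper must check by hand that every proper $4$-coloring of $C_5$ repeats a color on some neighborhood; and the upper bound $\chio(C_n)\le 4$ for $n\ge 6$ with $3\nmid n$ can be delegated to Brooks' theorem applied to the connected $4$-regular graph $C_n^2$, though your explicit patched colorings (which do check out) are essentially the paper's constructions $1,2,3,4,1,2,3,\ldots$ in disguise. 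For the lower bound when $3\nmid n$, the paper invokes the pigeonhole principle in one sentence; your forcing argument --- rainbow consecutive triples force $\vph(v_j)=\vph(v_{j+3})$ for all $j$, and $\gcd(3,n)=1$ then forces $\vph$ to be constant, contradicting properness --- establishes the same fact in more detail and is arguably the more transparent write-up. All steps are sound.
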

\begin{proof}
If $3|n$, then we can repeat the colors $1,2,3,1,2,3,\ldots$ around $C_n$ to get an odd
3-coloring.  Suppose instead that $3\nmid n$.  By the Pigeonhole principle, in every
proper 3-coloring $\vph$ some color appears twice on the neighborhood
of some vertex $v$, so $\vph$ is not odd.  Thus, $\chio(C_n)\ge 4$.
If $3\nmid n$ and $n\ne 5$, then we can begin with $1,2,3,4$ or
$1,2,3,4,1,2,3,4$ and continue with $1,2,3,1,2,3\ldots$.  Finally, it is easy to
check that each proper 4-coloring $\vph$ of $C_5$ uses the same color on both neighbors
of some vertex, so $\vph$ is not an odd coloring.  Thus, $\chio(C_5)=5$.
\end{proof}

It is interesting to note that the class of graphs $G$ with $\mad(G)=2$ and
$\chio(G)=4$ is richer than simply cycles $C_n$ with $3\nmid n$.  
Denote the vertices of $C_n$ by $v_1,\ldots,v_n$.  Starting from such a graph,
we can add arbitrarily many leaves at each $v_i$ with $3|i$.
It is straightforward to check that the resulting graph $G'$ also has
$\mad(G')=2$ and $\chio(G')=4$.  More generally, we can identify these $v_i$
with vertices of an arbitrary graph and $\chio$ will not decrease.

\begin{prop}
Every tree $T$ has $\chio(T)\le 3$.  Thus, if $\mad(T)<2$, then $\chio(T)\le
3$.  This bound on $\mad(T)$ is sharp.
\label{tree-prop}
\end{prop}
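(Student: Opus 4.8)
The plan is to prove the first sentence — that every tree $T$ has $\chio(T)\le 3$ — by induction on $|V(T)|$, and then to obtain the other two sentences as quick corollaries. The base case $|V(T)|\le 2$ is immediate: a single vertex gets color $1$ (the oddness requirement is vacuous at an isolated vertex), and a single edge gets colors $1$ and $2$.

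For the inductive step I would take $|V(T)|\ge 3$, pick a leaf $u$ with neighbor $v$, and note that $\deg_T(v)\ge 2$ since $T$ is connected on at least three vertices. Set $T':=T-u$, a tree on $|V(T)|-1\ge 2$ vertices, and let $\vph$ be an odd $3$-coloring of $T'$ furnished by the induction hypothesis. The crucial point is that $v$ is non-isolated in $T'$, so $\vph_o(v)$ is defined, and $\vph_o(v)\ne\vph(v)$ because $\vph$ is proper; hence $\{1,2,3\}\setminus\{\vph(v),\vph_o(v)\}$ has exactly one element, and I would use it as the color of $u$. I then need to check that this yields an odd $3$-coloring of $T$: it is proper because $\vph(u)\ne\vph(v)$; every $x\notin\{u,v\}$ satisfies $N_T(x)=N_{T'}(x)$ with unchanged colors, so its oddness is inherited from $\vph$; the vertex $u$ is odd since its only neighbor is $v$; and on $N_T(v)=N_{T'}(v)\cup\{u\}$ the color $\vph_o(v)$ still appears an odd number of times, since $\vph(u)\ne\vph_o(v)$.

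The second sentence then follows because $\mad(G)<2$ forces $G$ to be acyclic (a cycle is a subgraph of average degree exactly $2$), hence a forest; applying the first sentence to each component and combining the resulting colorings gives $\chio(G)\le 3$. For the claimed sharpness, Proposition~\ref{cycle-prop} gives $\chio(C_4)=4$ while $\mad(C_4)=2$, so the hypothesis $\mad(T)<2$ cannot be weakened to $\mad(T)\le 2$ (equivalently, $\chio(\G_{2+\epsilon})\ge 4$ for every $\epsilon>0$).

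I expect the only genuinely delicate step to be the extension of $\vph$ across the deleted leaf. The idea to get right is that one should \emph{preserve} the color that already witnesses $v$'s oddness in $T'$, rather than attempt to create oddness at $v$ from scratch; and the reason a legal color for $u$ always remains available is exactly that $\vph_o(v)$ and $\vph(v)$ are distinct, which uses nothing more than properness of $\vph$. The rest is routine bookkeeping.
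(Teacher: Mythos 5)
Your proposal is correct and follows essentially the same route as the paper: induct on the number of vertices, delete a leaf, and color it to avoid $\{\vph(w),\vph_o(w)\}$ on its neighbor $w$, then derive the forest case and use Proposition~\ref{cycle-prop} for sharpness. You simply spell out the verification (that $\vph_o(w)$ is preserved and that the $|V(T)|=2$ case needs separate handling) that the paper leaves implicit.
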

\begin{proof}
Let $T$ be a tree.
We use induction on $|T|$.  The case $|T|=1$ is trivial.
Now suppose that $|T|\ge 2$.  
Let $v$ be a leaf of $T$.
Let $T':=T-v$ and note that $T'$ is a
tree.  By hypothesis, $T'$ has an odd 3-coloring $\vph$.
Let $w$ be the neighbor of $v$ in $T$.  
To extend $\vph$ to an odd 3-coloring of $T$, we color $v$ with a color
outside $\{\vph(w),\vph_o(w)\}$.  This gives the desired odd 3-coloring of $T$.

If $\mad(G)<2$, then $G$ is a forest.  So the second statement follows from the
first.  Finally, the third statement follows from Proposition~\ref{cycle-prop}.
\end{proof}

The following proposition is folklore.  But we include a proof for completeness.

\begin{prop}
If $G$ is a planar graph with girth at least $g$, then $\mad(G)<2g/(g-2)$.
\label{girth-prop}
\end{prop}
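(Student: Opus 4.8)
The plan is to use Euler's formula together with a double-counting argument on the faces. First I would dispose of the trivial cases: if $G$ has at most one edge or is a forest then $|E(G)| < |V(G)|$ and the bound $\mad(G) < 2g/(g-2)$ holds for every subgraph, since $2g/(g-2) > 2$. So assume $G$ has a cycle; then since girth is a monotone parameter under taking subgraphs (any subgraph of $G$ also has girth at least $g$, or is a forest), it suffices to bound $2|E(H)|/|V(H)|$ for an arbitrary subgraph $H$ of $G$, and each such $H$ is again planar with girth at least $g$. Hence it is enough to prove that every planar graph $H$ with girth at least $g$ and at least one cycle satisfies $2|E(H)|/|V(H)| < 2g/(g-2)$, i.e. $(g-2)|E(H)| < g(|V(H)| - 2) + g < g|V(H)|$; actually the clean inequality to target is $|E(H)| \le \frac{g}{g-2}(|V(H)| - 2)$, which is strictly less than $\frac{g}{g-2}|V(H)|$.

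The key steps: fix a plane embedding of such an $H$ (we may assume $H$ is connected, since adding the component-wise bounds only helps), let $n = |V(H)|$, $m = |E(H)|$, $f$ the number of faces, so Euler's formula gives $n - m + f = 2$. Because $H$ has girth at least $g$, every face is bounded by a closed walk of length at least $g$, so summing face lengths gives $2m = \sum_{\text{faces}} (\text{length}) \ge g f$, hence $f \le 2m/g$. Substituting into Euler's formula yields $2 = n - m + f \le n - m + 2m/g$, which rearranges to $m(1 - 2/g) \le n - 2$, i.e. $m \le \frac{g}{g-2}(n-2) < \frac{g}{g-2}\, n$. Dividing by $n/2$ gives $2m/n < 2g/(g-2)$ for the connected case, and handling general $H$ by summing over components (or by noting the whole graph is a subgraph of one of its own blocks for the purpose of the max) completes the argument.

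The main thing to be careful about — rather than a deep obstacle — is the edge cases: graphs that are forests or have fewer than $g$ edges, for which no face-length bound is available, must be checked separately (there the average degree is strictly less than $2 \le 2g/(g-2)$), and one should be slightly careful that a face boundary may not be a simple cycle if $H$ has cut vertices, but the bound $2m \ge gf$ still holds because every facial walk has length at least $g$ (a facial walk of length less than $g$ would force a cycle shorter than $g$ or a multi-edge/loop, contradicting simplicity and the girth hypothesis). Finally, since $\mad$ is a max over subgraphs and each subgraph is again planar of girth at least $g$, the strict inequality passes to $\mad(G)$.
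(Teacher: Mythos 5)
Your proposal is correct and follows essentially the same route as the paper: fix a plane embedding, sum facial walk lengths to get $2|E| \ge g|F|$, and substitute into Euler's formula to bound $2|E|/|V|$. Your extra care with forests, disconnected subgraphs, and non-simple facial walks is sound but does not change the underlying argument.
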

\begin{proof}
Fix a plane embedding of a planar graph $G$ with girth at least $g$.  Since each
subgraph of $G$ also has girth at least $g$, it suffices to show that
$2|E(G)|/|V(G)|<2g/(g-2)$.  Summing the lengths of all facial walks gives
$2|E(G)|\ge g|F|$, where $F$ is the set of all faces.  
To prove the proposition,
we substitute this inequality into Euler's formula and solve for
$2|E(G)|/|V(G)|$.
\end{proof}

\section{$\Mad(G)<4$}
Recall, for each $\alpha>0$, that $\G_{\alpha}$ denotes the set of all graphs $G$
with $\mad(G)<\alpha$; and we write $\chio(\G_{\alpha})$ to denote the maximum
value of $\chio(G)$ over all $G\in \G_{\alpha}$.  Proposition~\ref{tree-prop}
shows that $\chio(G_2)=3$.  In this section, we determine the set of all values
$\alpha$ such that $\chio(\G_{\alpha})$ is defined; when it is defined, we prove
a (nearly sharp) upper bound on its value.

We denote by \Emph{$K_n^*$} the graph formed from $K_n$ by subdiving each edge once. 


\begin{lem}
We have $\chio(K_n^*)=n$ and $\mad(K_n^*)=4-8/(n+1)$.
\label{lem1}
\label{mad4lem}
\end{lem}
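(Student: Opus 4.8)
The plan is to establish the two equalities separately. Throughout, write $u_1,\dots,u_n$ for the original vertices of $K_n^*$ and, for $i\ne j$, write $s_{ij}$ for the vertex subdividing $u_iu_j$; thus $N(u_i)=\{s_{ij}:j\ne i\}$ has size $n-1$, $N(s_{ij})=\{u_i,u_j\}$, and $K_n^*$ is bipartite with parts $O=\{u_i\}$ and $S=\{s_{ij}\}$. (The construction below requires $n\ge 3$.)

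For $\mad(K_n^*)$: first compute the average degree of $K_n^*$ itself. It has $\frac{n(n+1)}{2}$ vertices ($n$ original plus $\frac{n(n-1)}{2}$ subdivision vertices) and $n(n-1)$ edges, so its average degree is $4-\frac{8}{n+1}$. It then suffices to show no subgraph is denser. Given a subgraph $H\subseteq K_n^*$, set $a=|V(H)\cap O|$ and $b=|V(H)\cap S|$. Every edge of $H$ joins $O$ to $S$, and $\deg_{K_n^*}(u_i)=n-1$ while $\deg_{K_n^*}(s_{ij})=2$, so summing degrees on each side gives $|E(H)|\le(n-1)a$ and $|E(H)|\le 2b$. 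I would then verify $(n+1)|E(H)|\le 2(n-1)(a+b)=2(n-1)|V(H)|$ by a short case split on whether $2b\le(n-1)a$, using the first bound on $|E(H)|$ in one case and the second in the other; each case collapses to a one-line inequality. Dividing gives $2|E(H)|/|V(H)|\le 4-8/(n+1)$, with equality for $H=K_n^*$, so $\mad(K_n^*)=4-8/(n+1)$.

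For $\chio(K_n^*)$: the lower bound is immediate from the odd condition at the subdivision vertices. Since $N(s_{ij})=\{u_i,u_j\}$, if an odd coloring $\vph$ had $\vph(u_i)=\vph(u_j)$ then the only color appearing on $N(s_{ij})$ would appear twice, so no color appears an odd number of times there, a contradiction. Hence $\vph$ assigns $u_1,\dots,u_n$ pairwise distinct colors and uses at least $n$ of them. For the upper bound, color $u_1,\dots,u_n$ with the colors $1,\dots,n$. Any proper coloring of the $s_{ij}$ already satisfies the odd condition at each $s_{ij}$ (its two neighbors carry distinct colors, each once), so the only remaining requirement is that for each $i$ some color appears an odd number of times on $N(u_i)=\{s_{ij}:j\ne i\}$. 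If $n$ is even this holds automatically, since $|N(u_i)|=n-1$ is odd. If $n$ is odd, I would color $s_{ij}$ with $2^{-1}(i+j)$ computed in $\mathbb{Z}_n$ (identifying the labels $1,\dots,n$ with $\mathbb{Z}_n$; note $2$ is invertible as $n$ is odd): this is a proper edge-coloring of $K_n$ in which color class $c$ misses vertex $c$, so $s_{ij}$ avoids colors $i$ and $j$, and the $n-1$ neighbors of $u_i$ receive $n-1$ distinct colors, each exactly once. Either way $n$ colors suffice, so $\chio(K_n^*)=n$.

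I expect the main obstacle to be the $\mad$ bound over all subgraphs: the two estimates $|E(H)|\le(n-1)a$ and $|E(H)|\le 2b$ are each far from tight on their own (already for a single edge), so one must argue throughout with whichever is smaller, which is exactly what the case split accomplishes. A lesser point of care is the odd-$n$ construction, where parity gives nothing and one genuinely needs an edge-coloring of $K_n$ avoiding both endpoint labels at every edge; the near-$1$-factorization $s_{ij}\mapsto 2^{-1}(i+j)$ supplies one.
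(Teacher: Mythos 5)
Your proof is correct, but it takes a genuinely different route from the paper on both halves. For the maximum average degree, the paper exhibits a fractional orientation of $K_n^*$ in which every vertex has indegree exactly $2-4/(n+1)$, which certifies $\mad(K_n^*)=4-8/(n+1)$ in one stroke; you instead bound $|E(H)|$ for an arbitrary subgraph $H$ directly via the two degree estimates $|E(H)|\le (n-1)a$ and $|E(H)|\le 2b$. Your argument is more elementary and self-contained (it does not invoke the orientation characterization of $\mad$), and in fact your case split is unnecessary: the convex combination $(n+1)|E(H)| = 2|E(H)| + (n-1)|E(H)| \le 2\cdot(n-1)a + (n-1)\cdot 2b = 2(n-1)|V(H)|$ gives the bound in one line. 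For the chromatic part, the paper proves the same lower bound (the $u_i$ must receive distinct colors because of the subdivision vertices) but dismisses the upper bound with ``it is easy to extend''; you supply the missing detail, correctly observing that the parity argument on $|N(u_i)|=n-1$ only works for $n$ even and that for $n$ odd one needs a genuine construction, which your near-$1$-factorization $s_{ij}\mapsto 2^{-1}(i+j)$ in $\mathbb{Z}_n$ provides. Your parenthetical restriction to $n\ge 3$ is also apt, since $\chio(K_2^*)=\chio(P_3)=3\ne 2$, an edge case the paper's statement glosses over (and which is harmless, as the lemma is only used for large $n$).
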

\begin{proof}
For each $n\ge 1$, let $\epsilon_n:=8/(n+1)$.
To show that $\mad(K^*_n)= 4-\epsilon_n$,
we give a fractional orientation of $K^*_n$ where each vertex has indegree exactly
$2-\epsilon_n/2$.  We orient each edge with fraction $1-\epsilon_n/4$ towards its
endpoint of degree 2 and fraction $\epsilon_n/4$ toward its endpoint of degree
$n$.  Each 2-vertex has indegree $2(1-\epsilon_n/4)=2-\epsilon_n/2$.
Each $(n-1)$-vertex has indegree $(n-1)(\epsilon_n/4) = (n-1)(2/(n+1)) =
(2n-2)/(n+1) = 2-4/(n+1) = 2-\epsilon_n/2$.  Thus, $\mad(G)=4-\epsilon_n$, as
desired.

In an odd coloring of $K^*_n$, all $(n-1)$-vertices must get distinct colors.  So
$\chio(K^*_n)\ge n$.  Given any coloring where all $(n-1)$-vertices get distinct
colors, it is easy to extend to an odd $n$-coloring of $K^*_n$.  
Thus, as claimed, $\chio(K^*_n)=n$.
\end{proof}

\begin{cor}
There exists a sequence $\epsilon_1,\epsilon_2,\ldots$ such that
$\epsilon_n>0$ for all $n\ge 1$ and $\lim_{n\to\infty}\epsilon_n=0$ and for each
$n\ge 1$ there exists a graph $G_n$ with $\mad(G_n)=4-\epsilon_n$ and
$\chio(G_n)=8/\epsilon_n-1$.
\label{cor1}
\end{cor}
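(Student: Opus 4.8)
The plan is to obtain the corollary as an immediate repackaging of Lemma~\ref{mad4lem}. For each $n\ge 1$ I would set $\epsilon_n:=8/(n+1)$ and take $G_n:=K_n^*$.

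First I would check that $(\epsilon_n)_{n\ge 1}$ has the claimed properties: each $\epsilon_n=8/(n+1)$ is positive, and $\epsilon_n\to 0$ as $n\to\infty$. Next, Lemma~\ref{mad4lem} gives $\mad(G_n)=\mad(K_n^*)=4-8/(n+1)=4-\epsilon_n$, which is the first required equality. Finally, Lemma~\ref{mad4lem} also gives $\chio(G_n)=\chio(K_n^*)=n$, and since $\epsilon_n=8/(n+1)$ we have $8/\epsilon_n-1=(n+1)-1=n$; hence $\chio(G_n)=8/\epsilon_n-1$, completing the verification.

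There is essentially no obstacle here: Lemma~\ref{mad4lem} already does all the work, and the corollary merely records its content in the $\epsilon$-parametrization. This form is exactly what is needed to see that $\chio(\G_\alpha)$ is undefined for every $\alpha\ge 4$ (the graphs $G_n$ all lie in $\G_4$ yet satisfy $\chio(G_n)=n\to\infty$), and to line the lower bound $8/\epsilon_n-1$ up against the upper bound $\lfloor 8/\epsilon\rfloor+2$ proved in the paper's first main theorem. The only computation worth spelling out is the identity $8/\epsilon_n-1=n$, which is immediate from the choice of $\epsilon_n$.
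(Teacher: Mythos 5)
Your proposal is correct and is essentially identical to the paper's own proof: both take $\epsilon_n:=8/(n+1)$ and $G_n:=K_n^*$ and read off the conclusion from Lemma~\ref{mad4lem} via the identity $8/\epsilon_n-1=n$. Nothing is missing.
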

\begin{proof}
Let $\epsilon_n:=8/(n+1)$ and $G_n:=K^*_n$.
Now $\chio(G_n)=n=(n+1)-1=8/\epsilon_n-1$.
\end{proof}

\begin{cor}
$\chio(\G_{\alpha})$ is undefined whenever $\alpha\ge 4$.
\label{cor2}
\end{cor}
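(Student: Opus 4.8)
The plan is to deduce Corollary~\ref{cor2} directly from Lemma~\ref{mad4lem} and Corollary~\ref{cor1}. Fix $\alpha\ge 4$. To show $\chio(\G_\alpha)$ is undefined, it suffices to exhibit, for every integer $N$, a graph $G\in\G_\alpha$ with $\chio(G)\ge N$; then $\chio(G)$ has no finite maximum over $\G_\alpha$.

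The key step is to observe that $\mad(K_n^*)=4-8/(n+1)<4\le\alpha$ by Lemma~\ref{mad4lem}, so $K_n^*\in\G_4\subseteq\G_\alpha$ for every $n\ge 1$ (here I use that $\G_\beta\subseteq\G_\alpha$ whenever $\beta\le\alpha$, which is immediate from the definition of $\G_\alpha$ as the family of graphs with $\mad$ strictly below the threshold). Since $\chio(K_n^*)=n$, also by Lemma~\ref{mad4lem}, the family $\{K_n^*:n\ge 1\}$ furnishes graphs in $\G_\alpha$ of arbitrarily large odd chromatic number. Concretely, given any candidate value $N$, the graph $K_{N}^*$ lies in $\G_\alpha$ and has $\chio(K_N^*)=N$, so the supremum of $\chio(G)$ over $G\in\G_\alpha$ is not attained by $N$; as $N$ was arbitrary, no finite value is the maximum, i.e.\ $\chio(\G_\alpha)$ is undefined. (Equivalently, one may just cite Corollary~\ref{cor1}: the graphs $G_n$ produced there have $\mad(G_n)=4-\epsilon_n<4\le\alpha$ and $\chio(G_n)=8/\epsilon_n-1\to\infty$.)

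There is no real obstacle here; the content is entirely in Lemma~\ref{mad4lem}, and the corollary is a one-line packaging of it. The only point worth stating carefully is the monotonicity $\G_\beta\subseteq\G_\alpha$ for $\beta\le\alpha$, so that the examples built at threshold $4$ certify undefinedness at every threshold $\alpha\ge 4$.

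\begin{proof}
Fix $\alpha\ge 4$. By Lemma~\ref{mad4lem}, for each $n\ge 1$ we have $\mad(K_n^*)=4-8/(n+1)<4\le\alpha$, so $K_n^*\in\G_\alpha$; moreover $\chio(K_n^*)=n$. Hence $\chio(G)$ takes arbitrarily large values as $G$ ranges over $\G_\alpha$, so it has no finite maximum. That is, $\chio(\G_\alpha)$ is undefined.
\end{proof}
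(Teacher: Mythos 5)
Your proof is correct and matches the paper's intended argument exactly: the paper leaves this corollary without an explicit proof precisely because it follows immediately from Lemma~\ref{mad4lem}, namely that the graphs $K_n^*$ all satisfy $\mad(K_n^*)<4\le\alpha$ while $\chio(K_n^*)=n$ is unbounded. Nothing is missing.
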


In Lemma~\ref{lem1}, we considered $K^*_n$ which is formed by subdividing each
edge of $K_n$.  Applying the same construction to any $n$-chromatic graph $H$
yields a graph $H'$ with $\chio(H')=n$ and $\mad(H')<4$.  
Since there exist graphs $H$ with both chromatic number and girth arbitrarily
large, there also exist graphs $H'$ with $\chio(H')$ and girth arbitrarily
large, and with $\mad(H')<4$.
However, every $n$-chromatic graph $H$
that does not contain $K_n$ as a subgraph gives an upper bound on
$4-\epsilon$ that is worse (larger) than that in Corollary~\ref{cor1}.
Consider
an $n$-critical (sub)graph $H$ with $a$ vertices and $b$ edges.
Recall that $\delta(H)\ge n-1$.  But $H$ is not $(n-1)$-regular,
by Brooks' Theorem.  Thus, $b=|E(H)|>(n-1)|V(H)|/2=(n-1)a/2$.
Subdividing each edge of $H$ gives $H'$ with $|V(H')|=a+b$ and
$|E(H')|=2b>a(n-1)$.  Thus, $\mad(H')\ge 2|E(H')|/|V(H')| =
4b/(a+b)>2a(n-1)/(a(n-1)/2+a)=2a(n-1)/(a(n+1)/2)=4(n-1)/(n+1)=4-8/(n+1)$.

We next show that the construction $K_n^*$ in Lemma~\ref{mad4lem} is
nearly sharp.

\begin{thm}
Fix $\epsilon$ such that $0<\epsilon\le 8/5$.
If $\mad(G)\le 4-\epsilon$, then $\chio(G)\le \lfloor 8/\epsilon\rfloor+2$.
As $\epsilon\to 0$, infinitely often there exists $G_{\epsilon}$ such that
$\mad(G_{\epsilon})=4-\epsilon$ and $\chio(G_{\epsilon})=\lfloor
8/\epsilon\rfloor-1$.
\label{mad4thm}
\end{thm}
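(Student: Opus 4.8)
The plan is to prove the upper bound by the discharging method and to obtain the lower bound from the construction $K_n^*$ of Lemma~\ref{mad4lem} after a brief subdivision argument. For the upper bound, set $k:=\lfloor 8/\epsilon\rfloor+2$ and suppose, for contradiction, that $G$ is a vertex-minimal graph with $\mad(G)\le 4-\epsilon$ and $\chio(G)>k$. First I would record structural facts about a minimal counterexample: $G$ is connected, has minimum degree at least $2$ (a vertex of degree at most $1$ can be removed, colored by minimality, and reinserted with a color avoiding at most two forbidden colors, which is possible since $k\ge 3$), and more refined reducibility statements controlling low-degree vertices and the $2$-vertices adjacent to them. The key reducible configurations will be: a $2$-vertex with a $2$-neighbor, and, more generally, a $d$-vertex $v$ with "many" $2$-neighbors — if $v$ has a $2$-neighbor $u$ whose other neighbor is $w$, then in an odd $k$-coloring of $G-u$ we must avoid the colors $\vph(v),\vph(w)$ on $u$, plus possibly repair the parity at $v$ and $w$; a counting argument shows this fails only when the degrees are small relative to $k$, which is exactly where $\epsilon$ enters. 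I expect the technical heart to be pinning down precisely which such configurations are reducible, since the parity condition (as opposed to a list-coloring condition) means deleting a vertex can break the oddness of two surviving vertices at once, and one must check those can be fixed without a cascade.

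Next I would set up the discharging. Assign to each vertex $v$ the charge $\mathrm{ch}(v):=d(v)$; since $\mad(G)\le 4-\epsilon$, the total charge is at most $(4-\epsilon)|V(G)|$, so the average charge is at most $4-\epsilon$. The target is to redistribute charge so that every vertex ends with charge at least $4-\epsilon$, a contradiction. The natural rule is: every $3^+$-vertex sends $1-\epsilon/2$ to each of its $2$-neighbors (this mirrors the fractional orientation used in Lemma~\ref{mad4lem}, where each edge at a $2$-vertex carries $1-\epsilon_n/4$ toward the $2$-vertex from each side, i.e.\ a $2$-vertex receives $2-\epsilon_n/2$ total). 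A $2$-vertex then finishes with $2+2(1-\epsilon/2)=4-\epsilon$, provided both its neighbors are $3^+$-vertices, which the reducibility lemma guarantees. The crux is then to show each $3^+$-vertex $v$ of degree $d$ keeps charge at least $4-\epsilon$: it ends with $d-t(1-\epsilon/2)$ where $t$ is its number of $2$-neighbors, so we need $t\le (d-4+\epsilon)/(1-\epsilon/2)$. This is where the reducible-configuration bound on $t$ must exactly match: the reducibility argument should forbid $v$ from having more than roughly $(d-4)/(1-\epsilon/2)+O(1)$ $2$-neighbors, and the $k=\lfloor 8/\epsilon\rfloor+2$ is chosen precisely so the arithmetic closes. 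I expect to verify a couple of small-degree boundary cases ($d=3$, and $d$ near $k$) by hand.

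For the lower bound, the statement claims that infinitely often there is $G_\epsilon$ with $\mad(G_\epsilon)=4-\epsilon$ and $\chio(G_\epsilon)=\lfloor 8/\epsilon\rfloor-1$. I would simply take $\epsilon:=8/(n+1)$ for $n\ge 1$ (a sequence tending to $0$) and $G_\epsilon:=K_n^*$: by Lemma~\ref{mad4lem} we have $\mad(K_n^*)=4-8/(n+1)=4-\epsilon$ and $\chio(K_n^*)=n$, while $\lfloor 8/\epsilon\rfloor-1=(n+1)-1=n$. (This is exactly Corollary~\ref{cor1} restated; one only needs to remark that $\epsilon=8/(n+1)$ runs through infinitely many values in $(0,8/5]$, namely all those with $n\ge 4$.) So the two bounds differ by an additive constant of $3$, which is what "nearly sharp" means here. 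The only genuine obstacle is the upper-bound discharging: making the reducibility lemma strong enough that the single discharging rule $1-\epsilon/2$ per $2$-neighbor suffices, while keeping the parity-repair arguments from cascading. Once the reducible configurations are identified, the charge bookkeeping is routine and forced by the choice of $k$.
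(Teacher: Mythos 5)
Your overall architecture (minimal counterexample, reducible configurations, discharging with the rule that each $2$-vertex draws $1-\epsilon/2$ from each neighbor, and the lower bound via $K_n^*$ with $\epsilon=8/(n+1)$) is exactly the paper's; the lower-bound half is complete and correct. But the upper-bound half has a genuine gap: the one step that actually justifies the value $k=\lfloor 8/\epsilon\rfloor+2$ is the reducibility of a vertex $v$ with $d(v)-(1-\epsilon/2)d_2(v)\le 4-\epsilon$ (where $d_2(v)$ counts $2$-neighbors), and you never carry it out -- you write ``a counting argument shows this fails only when the degrees are small relative to $k$'' and defer the rest. The paper's argument here is: delete $v$ together with all its $2$-neighbors, color the rest, recolor $v$ avoiding at most $2d(v)-d_2(v)$ colors (two per $3^+$-neighbor, one per $2$-neighbor, namely the color of its far endpoint -- which also guarantees each recolored $2$-vertex gets two distinctly colored neighbors and hence an odd color, a parity point you flag but do not resolve), then check $1+2d(v)-d_2(v)\le d(v)+4$ via the charge inequality and $d(v)\le 8/\epsilon-2$ via $d(v)(1-x)\le d_2(v)(1-x)+\,$charge bound, with two integrality roundings. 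Without this computation the choice of $k$ is unmotivated, and your sketch of deleting a single $2$-vertex $u$ and avoiding $\vph(v),\vph(w),\vph_o(v),\vph_o(w)$ does not substitute for it (and by itself fails when $\vph(v)=\vph(w)$, since then $u$ has no odd color).

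Two further concrete problems with the discharging as you set it up. First, a $3$-vertex with \emph{no} $2$-neighbors keeps charge exactly $3<4-\epsilon$ whenever $\epsilon<1$, so your target ``every vertex finishes with at least $4-\epsilon$'' is unattainable: you must show that \emph{every} $3$-vertex is reducible outright (delete it, color, and avoid the at most $6$ colors $\vph(w_i),\vph_o(w_i)$; this needs $k\ge 7$ and is precisely where the hypothesis $\epsilon\le 8/5$ is used -- your proposal never uses it). Second, the hypothesis is the non-strict $\mad(G)\le 4-\epsilon$, and the $2$-vertices finish with exactly $4-\epsilon$, so ``all vertices finish with at least $4-\epsilon$'' is not a contradiction; you need the reducible configuration to include the equality case $d(v)-(1-\epsilon/2)d_2(v)=4-\epsilon$ (equivalently, run the argument as the paper does: either some vertex satisfies the non-strict charge inequality and is reducible, or every $4^+$-vertex finishes strictly above $4-\epsilon$ and the total charge exceeds $(4-\epsilon)|V(G)|$). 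Both issues are fixable, but as written the plan would not close.
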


\begin{proof}
The second statement follows directly from Corollary~\ref{cor1}.

For the first statement, let $k:=\lfloor 8/\epsilon\rfloor+2$\aside{$k$} and
note that $k\ge 7$.  Our proof is by induction on $|V(G)|$.  The base case is
when $|V(G)|=1$, so $\chio(G)=1$.
%
Instead assume $|V(G)|\ge 2$.
If $G$ contains a 1-vertex $v$ with
neighbor $w$, then $G-v$ has an odd $k$-coloring $\vph$ and we can extend
$\vph$ to $v$ by coloring $v$ to avoid $\vph(w)$ and $\vph_o(w)$.
If $G$ contains a 3-vertex $v$, then denote $N(v)$ by $\{w_1,w_2,w_3\}$.
Now $G-v$ has an odd $k$-coloring $\vph$ and we can extend $\vph$ to $v$ by
coloring $v$ to avoid
$\{\vph(w_1), \vph(w_2), \vph(w_3), \vph_o(w_1), \vph_o(w_2), \vph_o(w_3)\}$.
So we assume that $G$ has neither 1-vertices nor 3-vertices.  
Suppose that $G$ has adjacent 2-vertices $v_1$ and $v_2$, and denote the
remaining neighbors of $v_1$ and $v_2$, respectively, by $v_0$ and $v_3$.
Now $G-\{v_1,v_2\}$ has an odd $k$-coloring $\vph$.  To extend this to $v_1$
and $v_2$, we first color $v_1$ to avoid
$\{\vph(v_0),\vph_o(v_0),\vph(v_3)\}$ and then color $v_2$ to avoid the new
color on $v_1$ as well as $\{\vph(v_0),\vph(v_3),\vph_o(v_3)\}$.

Since $\mad(G)<4$, we know that $\delta(G)=2$.  
We use discharging to show that some vertex $v$
with $d(v)\le 8/\epsilon-2$ has ``many'' 2-neighbors.  
Similar to the case of adjacent 2-vertices above, we will delete $v$ and its
2-neighbors, find an odd $k$-coloring $\vph$ for this smaller graph, and
extend $\vph$ to all of $G$.
Let $x:=1-\epsilon/2$\aside{$x$}.  We give each vertex $v$ initial charge
$d(v)$ and use the following single discharging rule: Each 2-vertex takes
charge $x$ from each neighbor.  Since $G$ does not have adjacent 2-vertices,
each 2-vertex finishes with charge $2+2x=4-\epsilon$.  For each $4^+$-vertex
$v$, let \Emph{$d_2(v)$} denote the number of 2-neighbors of $v$.  Now $v$ finishes
with charge $d(v)-xd_2(v)$.  If $d(v)-xd_2(v)>2+2x=4-\epsilon$ for each $v$,
then we contradict the assumption $\mad(G)\le 4-\epsilon$.  So there exists $v$
such that $d(v)-xd_2(v)\le 2+2x$.

Form $G'$ from $G$ by deleting $v$ and all of its 2-neighbors.  By induction,
$G'$ has an odd $k$-coloring $\vph$.  To extend $\vph$ to $G$, we first color
$v$ to avoid the colors on the colored neighbors (in $G$) of its deleted
2-neighbors (in $G'$) as well as, for each $4^+$-neighbor $w$, to avoid
$\vph(w)$ and $\vph_o(w)$.  For convenience, we denote this color used on $v$
by $\vph(v)$.
Then we color each 2-neighbor $x$ of $v$, with
other neighbor $y$, to avoid $\{\vph(v),\vph_o(v),\vph(y),\vph_o(y)\}$.
This gives an odd coloring $\vph'$ of $G$.  We must only show that $\vph'$ uses
at most $k$ colors.  In total, $v$ must avoid at most $2d(v)-d_2(v)$
colors.  So it will suffice to show that $k\ge 1+(2d(v)-d_2(v))$.
Note that $1+2d(v)-d_2(v)< 1+d(v)+(d(v)-xd_2(v))\le 1+d(v)+2+2x<1+d(v)+4$.
Since $1+2d(v)-d_2(v)$ is an integer, $1+2d(v)-d_2(v)\le d(v)+4$.
So now we must bound $d(v)$.  Clearly, $d(v)-d(v)x\le d(v)-d_2(v)x\le 2+2x$.
So $d(v)\le (2+2x)/(1-x) = (4-\epsilon)/(\epsilon/2) = 8/\epsilon-2$.
Since $d(v)$ is an integer, $d(v)\le \lfloor 8/\epsilon\rfloor-2$.
Thus, $d(v)+4\le (\lfloor 8/\epsilon\rfloor-2)+4=\lfloor 8/\epsilon\rfloor+2=
k$.
\end{proof}

\begin{rem}
In the previous proof we handled adjacent 2-vertices as follows.
``Suppose that $G$ has adjacent 2-vertices $v_1$ and $v_2$, and denote the
remaining neighbors of $v_1$ and $v_2$, respectively, by $v_0$ and $v_3$.
Now $G-\{v_1,v_2\}$ has an odd $k$-coloring $\vph$.  To extend this to $v_1$
and $v_2$, we first color $v_1$ to avoid
$\{\vph(v_0),\vph_o(v_0),\vph(v_3)\}$ and then color $v_2$ to avoid the new
color on $v_1$ as well as $\{\vph(v_0),\vph(v_3),\vph_o(v_3)\}$.''  In this
argument we implicitly assumed that $v_3\ne v_0$.  Otherwise, our choice for
color on $v_1$ might create a problem for $v_3$.  Specifically, suppose that
$v_0$ is adjacent to 2-vertices $v_1$ and $v_2$, which are also adjacent to each
other, and that colors 1 and 2 each appear once on $N(v)$ under $\vph$ (and no
other color appears an odd number of times); this is possible when $v_3=v_0$.
Now $\vph_o(v_0)$ is undefined.  But if we are careless and color $v_1$ with 1
and color $v_2$ with 2, then we create a problem for $v_0$.  This obstacle has
an easy work-around.  Rather than defining $\vph_o(v_0)$ relative to the coloring
$\vph$, we define it relative to the \emph{current coloring}, including any
vertices that were deleted but are now already colored.  For brevity, we omit
mention of this issue throughout the paper, since the solution above always
works.
\end{rem}

It is natural to consider a list-coloring analogue of odd coloring (although we
omit a formal definition).  We note that, with minor modifications, the
proof of Theorem~\ref{mad4thm} works equally well in the context of odd
list-coloring and even in the context of odd correspondence coloring.
The same is true for the proofs of Theorem~2(a) and Theorem~2(b).

We suspect that the construction in Lemma~\ref{lem1} is sharp.

\begin{conj}
Fix $\epsilon$ such that $0<\epsilon\le 8/5$.
If $\mad(G)\le 4-\epsilon$, then $\chio(G)\le \lfloor 8/\epsilon\rfloor-1$.
\label{conj1}
\end{conj}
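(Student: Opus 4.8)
Since the conjectured bound is exactly $\chio(K_n^*)$ from Lemma~\ref{mad4lem}, the plan is to sharpen the proof of Theorem~\ref{mad4thm} until every estimate in it becomes lossless on the family $K_n^*$. Write $k:=\lfloor 8/\epsilon\rfloor-1$. The one step in that proof with genuine slack is the final count: a reducible $4^+$-vertex $v$ is shown to forbid at most $2d(v)-d_2(v)$ colors, and this is then bounded crudely by $d(v)+4$ and by $\lfloor 8/\epsilon\rfloor+2$. Instead I would keep the count exact. With $a:=d_2(v)$ and $b:=d(v)-d_2(v)$ we have $2d(v)-d_2(v)=a+2b$, and the discharging inequality $d(v)-xd_2(v)\le 2+2x$ with $x=1-\epsilon/2$ rearranges to $\epsilon a+2b\le 8-2\epsilon$; combining this with $b\ge 0$ (which forces $a\le 8/\epsilon-2$) and the integrality of $a+2b$ yields $a+2b\le\lfloor 8/\epsilon\rfloor-2$ whenever $\epsilon\le 1$. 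So $v$ forbids at most $k-1$ colors and can be colored, and its $2$-neighbors are then handled verbatim as in the proof of Theorem~\ref{mad4thm}.

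For this to close, the reducible configurations at the start of that proof must also get by with only $k$ colors, and here a genuine obstacle surfaces: the $3$-vertex reduction colors $v$ to avoid $\{\vph(w_i),\vph_o(w_i)\}$ over its three neighbors, i.e.\ up to $6$ colors, and so needs $k\ge 7$, that is $\epsilon\le 1$. Thus the argument above proves the conjecture as stated for $\epsilon\le 1$---precisely the range in which $K_n^*$ is the conjectured extremal family---but for $\epsilon\in(1,8/5]$ the whole reducible-configuration arsenal has to be rebuilt with a smaller palette. That would require a subtler treatment of $3$-vertices (and of short paths of $3^-$-vertices), presumably together with new forbidden configurations---say, $4^+$-vertices of prescribed degree carrying many $2$- and $3$-neighbors---and a discharging argument engineered so that the located vertex still forbids at most $k-1$ colors under a count as exact as the one above.

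The hard part, and the reason I expect the full conjecture to be difficult or in need of correction, is the regime $\epsilon$ near $8/5$, where $k$ drops to $4$. There the target overlaps the territory of Propositions~\ref{cycle-prop} and~\ref{tree-prop} and of Theorem~2(b), and the extremal graphs cease to be subdivided cliques: already $C_5$ has $\chio(C_5)=5$ while $\mad(C_5)=2\le 4-\epsilon$ for every $\epsilon\le 2$, which collides with the conjectured bound $k=4$ as soon as $\epsilon>4/3$. So any correct statement in that regime must weaken the bound or explicitly exempt such small graphs, and pinning down the complete list of obstructions---presumably via a many-case discharging argument in the spirit of Theorem~2(b)---is where I expect essentially all of the remaining effort to go. Concretely, I would split the conjecture at $\epsilon=1$: prove it below that threshold by the sharpened count above, and treat $\epsilon\in(1,8/5]$ as a separate problem whose statement likely needs adjustment first.
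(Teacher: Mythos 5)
The first thing to say is that the paper contains no proof of this statement: it is Conjecture~\ref{conj1}, which the paper explicitly leaves open (only the cases $\epsilon=1$ and $\epsilon=8/7$ are established, in stronger forms, as Theorems~2(a) and~2(b)). So there is no ``official'' argument to measure yours against, and your proposal --- as you yourself acknowledge --- does not prove the statement as given. But the reason it cannot is the most valuable part of your write-up: your $C_5$ observation is correct and actually \emph{refutes} the conjecture as written. By Proposition~\ref{cycle-prop}, $\chio(C_5)=5$, while $\mad(C_5)=2\le 4-\epsilon$ for all $\epsilon\le 2$; since $\lfloor 8/\epsilon\rfloor-1\le 4$ exactly when $\epsilon>4/3$, every $\epsilon\in(4/3,8/5]$ yields a counterexample. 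The paper guards only against $\epsilon=2$ via the cycles $C_{3s+1}$ with $\chio=4$ and overlooks $C_5$. Any correct version of the conjecture must therefore stop at $\epsilon\le 4/3$ or exclude such small obstructions.

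Your positive half for $\epsilon\le 1$ also appears sound, and I verified the key step: writing $a:=d_2(v)$, $b:=d(v)-d_2(v)$, the discharging inequality gives $\epsilon a+2b\le 8-2\epsilon$, hence $a+2b\le 8/\epsilon-2+2b(1-1/\epsilon)\le 8/\epsilon-2$ when $\epsilon\le 1$, and integrality gives $a+2b\le\lfloor 8/\epsilon\rfloor-2=k-1$; meanwhile every other reduction in the proof of Theorem~\ref{mad4thm} (1-vertices, 3-vertices, adjacent 2-vertices, and the 2-neighbors of the discharging vertex) needs at most $7$ available colors, which $k=\lfloor 8/\epsilon\rfloor-1\ge 7$ supplies. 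That is a genuine sharpening of Theorem~\ref{mad4thm} on the range $\epsilon\le 1$, matching the $K_n^*$ lower bound of Lemma~\ref{mad4lem} there. What your proposal leaves untouched is $\epsilon\in(1,4/3]$, where $k\in\{5,6\}$: the conjecture is not contradicted by any cycle there, but the 3-vertex reduction (needing $7$ colors) and your exact count both fail, so neither the conjecture nor its natural correction is settled on that interval. In short: not a proof of the statement, but a correct partial proof plus a correct disproof of the remaining range --- which is more useful than a proof attempt that pretends the statement is true.
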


Conjecture~\ref{conj1} cannot be extended to allow $\epsilon=2$, since
$\mad(C_{3s+1})=2$, but $\chio(C_{3s+1})=4$, for each positive integer $s$.  In
contrast, when $\epsilon=1$ and $\epsilon=8/7$, we prove the conjecture in a
stronger form.  We show that if $\mad(G)\le4-\epsilon$, then
$\chio(G)\le\lfloor 8/\epsilon\rfloor-2$ unless $G$ contains as a subgraph
$K^*_s$, where $s:=\lfloor 8/\epsilon\rfloor -1$.  We present these proofs in
Sections~\ref{6coloring-sec}~and~\ref{5coloring-sec}. 

Recall that a graph $H$ is \Emph{$k$-critical} if $\chi(H)=k$ and $\chi(H-e)<k$ for
every edge $e\in E(H)$.  Analogously, a graph $H$ is \Emph{odd-$k$-critical} if
$\chio(H)=k$ and $\chio(H-e)<k$ for every edge $e\in E(H)$.  The latter notion
is more subtle\footnote{An interesting example is $C_5$, since $\chio(C_5)=5$,
but $C_5$ has no odd-4-critical subgraph.}, since possibly $H'$ is a subgraph
of $H$, but $\chio(H')> \chio(H)$.  More concretely, if we form $H$ from
$K^*_n$ with $n\ge 3$ by adding a leaf adjacent to each vertex, then
$\chio(H)\le3$, even though $\chio(K^*_n)=n$.  It is easy to check that
subdividing every edge of a $k$-critical graph, with $k\ge 6$, yields an
odd-$k$-critical graph.  Kostochka and Yancey~\cite{KY1,KY2} showed that the
sparsest $k$-critical graphs are so-called $k$-Ore graphs.  For each $k\ge 6$,
by subdividing all edges in $k$-Ore graphs, we get an infinite family of
odd-$k$-critical graphs with maximum average degrees slightly larger than
$4-8/(k+3)$.  We suspect that in the limit no infinite family of 
odd-$k$-critical graphs has smaller maximum average degree.

\section{Odd 6-Colorings}
\label{6coloring-sec}
The goal of this section is to prove the following result.

\begin{thmTa}
\label{6color-thm}
If $\mad(G)<3$, then $\chio(G)\le 6$.  
This includes all planar $G$ of girth at least 6.
\end{thmTa}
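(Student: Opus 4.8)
The plan is to use the same discharging framework as in Theorem~\ref{mad4thm}, but to push the analysis further to exploit the hypothesis $\mad(G)<3$ (not merely $<4$). Suppose $G$ is a minimal counterexample with $\chio(G)\ge 7$; we may assume $\chio(G)=7$ and that $G$ is connected. As before, $G$ has no $1$-vertex (a leaf can always be colored avoiding two forbidden colors, since $7>2$), and no adjacent $2$-vertices (handled exactly as in the earlier proof, using $7>5$). Since $\mad(G)<3$, we have $\delta(G)=2$. The key extra leverage is that $\mad(G)<3$ forces many $2$-vertices relative to the high-degree vertices, so I expect to show that some vertex $v$ of small degree has enough $2$-neighbors that, after deleting $v$ together with all its $2$-neighbors, the resulting graph $G'$ has an odd $6$-coloring $\vph$ that extends: $v$ must avoid at most $2d(v)-d_2(v)$ colors (one color and one ``$\vph_o$'' per high-degree neighbor, one color per $2$-neighbor's far endpoint), and each deleted $2$-neighbor avoids at most $4$ colors. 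So it suffices that $6\ge 1+2d(v)-d_2(v)$ and $6\ge 5$, the latter being trivial.

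The discharging itself: give each vertex charge $d(v)$, and have each $2$-vertex pull charge $\tfrac12$ from each of its two neighbors (this is the natural amount since we want $2$-vertices to reach $3$, and $x=1-\epsilon/2=1/2$ when $\epsilon=1$). Each $2$-vertex ends at $2+2\cdot\tfrac12=3$. A $d$-vertex with $d\ge 3$ ends at $d-\tfrac12 d_2(v)$. Since $\mad(G)<3$, not every vertex can finish with charge $\ge 3$, so some vertex $v$ with $d:=d(v)\ge 3$ has $d-\tfrac12 d_2(v)<3$, i.e.\ $d_2(v)>2d-6$, i.e.\ $d_2(v)\ge 2d-5$. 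Then $2d(v)-d_2(v)\le 2d-(2d-5)=5$, so $v$ must avoid at most $5$ colors and the extension succeeds with $6$. The boundary cases $d=3,4,5,6,\ldots$ all satisfy this, so the discharging is clean; the one subtlety is that $3$-vertices need separate treatment since a $3$-vertex could have $0$ or $1$ $2$-neighbors and still finish below $3$ --- but a $3$-vertex is directly reducible (delete it, color avoiding its three neighbors' colors and their three $\vph_o$ values, a total of $6<7$ forbidden colors), so we may assume from the outset that $G$ has no $3$-vertices, just as in Theorem~\ref{mad4thm}; then every $4^+$-vertex with $d_2(v)\ge 2d(v)-5$ is reducible.

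I expect the main obstacle to be the careful bookkeeping of exactly which colors $v$ and its $2$-neighbors must avoid, and in particular the degenerate configurations flagged in the Remark --- where a $2$-neighbor of $v$ has its ``far endpoint'' equal to $v$ itself, or two $2$-neighbors share a far endpoint, or $\vph_o(w)$ is undefined for some neighbor $w$. The fix is the same as in the Remark: define $\vph_o$ relative to the current (partial) coloring rather than to the coloring of $G'$, coloring $v$ first and then extending to the $2$-neighbors one at a time, always avoiding the at most $4$ relevant colors. Since $6\ge 5$ always and $6\ge 1+(2d(v)-d_2(v))$ whenever $d_2(v)\ge 2d(v)-5$, the argument closes. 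Finally, the ``planar girth $\ge 6$'' remark is immediate from Proposition~\ref{girth-prop} with $g=6$, which gives $\mad(G)<2\cdot 6/(6-2)=3$.
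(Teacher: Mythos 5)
Your overall strategy is the paper's: the same discharging rule (each $2$-vertex pulls $1/2$ from each neighbor) and the same kind of reducible configuration (a vertex $v$ with $3\le d(v)\le 5$ and $d_2(v)\ge 2d(v)-5$, deleted together with its $2$-neighbors). The paper merely organizes it in the opposite order --- it first proves that a $3$-vertex with a $2$-neighbor, a $4$-vertex with three $2$-neighbors, and a $5$-vertex with five $2$-neighbors are all reducible, and then shows every vertex retains charge at least $3$; your ``find the deficient vertex, then reduce it uniformly'' presentation is the one used for Theorem~\ref{mad4thm} and is equivalent.

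However, one step is genuinely wrong: you claim a $3$-vertex is ``directly reducible (delete it, color avoiding its three neighbors' colors and their three $\vph_o$ values, a total of $6<7$ forbidden colors).'' You are constructing an odd \emph{six}-coloring, so a list of $6$ forbidden colors can exhaust the entire palette; the comparison ``$6<7$'' (like the earlier ``$7>2$'' and ``$7>5$'') only makes sense if you are extending a $7$-coloring, which would prove $\chio(G)\le 7$ rather than $\le 6$. So you cannot assume $G$ has no $3$-vertices. Fortunately the step is also unnecessary, and your stated worry about it is based on a miscount: a $3$-vertex with no $2$-neighbor finishes the discharging with charge exactly $3$, not less, so the deficient vertex produced by your charge count, if it has degree $3$, satisfies $d_2(v)\ge 1$ and hence $2d(v)-d_2(v)\le 5$. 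Your uniform reduction (color $v$ avoiding at most $5$ colors, then each deleted $2$-neighbor avoiding at most $4$, with $\vph_o$ taken relative to the current partial coloring as in the Remark) already covers this case --- it is exactly the paper's ``$3$-vertex with a $2$-neighbor'' configuration. Delete the faulty $3$-vertex elimination, fix the palette size to $6$ throughout the preliminary reductions (where $6>2$ and $6>4$ still suffice), and run the reduction for all $d(v)\in\{3,4,5\}$; then the proof closes.
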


Note that Theorem~\ref{6color-thm} is sharp in two senses.
First, $\mad(K_6^*)=4-8/(6+1)=2+6/7$ and $\chio(K_6^*)=6$, so the upper bound
of 6 cannot be decreased.  Second, $\mad(K_7^*)=4-8/(7+1)=3$ and
$\chio(K_7^*)=7$, so the hypothesis $\mad(G)<3$ cannot be weakened at all.

\begin{proof}
The second statement follows from the first by Proposition~\ref{girth-prop}.
So we prove the first.  Assume the statement is false, and $G$ is a
counterexample with as few vertices as possible.  As in the proof of
Theorem~\ref{mad4thm}, we assume 
$G$ has no 1-vertex and $G$ has no adjacent 2-vertices.
Suppose $G$ has a 3-vertex $v$ with neighbors $w_1,w_2,w_3$, where
$d(w_1)=2$.  Denote the other neighbor of $w_1$ by $x_1$.  Let $G':=G-\{v,w_1\}$.
 By minimality, $G'$ has an odd 6-coloring $\vph$.  To extend $\vph$ to $G$,
we color $v$ to avoid $\{\vph(w_1), \vph(w_2), \vph(w_3), \vph_o(w_2),
\vph_o(w_3)\}$ and then color $w_1$ to avoid the new color on $v$ and also
to avoid $\{\vph(x_1),\vph_o(x_1)\}$.  Thus, no 3-vertex has a 2-neighbor.

Suppose $G$ has a 4-vertex with neighbors $w_1,w_2,w_3,w_4$, where
$d(w_1)=d(w_2)=d(w_3)=2$.  For each $i\in \{1,2,3\}$, denote the other neighbor of
$w_i$ by $x_i$.  Let $G':=G-\{v,w_1,w_2,w_3\}$.  By minimality, $G'$ has an odd
6-coloring $\vph$.  To extend $\vph$ to $G$, color $v$ to avoid
$\{\vph(x_1), \vph(x_2), \vph(x_3)$, $\vph(w_4), \vph_o(w_4)\}$ and then
color each $w_i$ to avoid the new color on $v$ and also avoid
$\{\vph(x_i)$, $\vph_o(x_i), \vph(w_4)\}$.  Thus, no 4-vertex in $G$ has
three (or more) 2-neighbors.

Suppose $G$ has a 5-vertex $v$ with five 2-neighbors $w_1, w_2, w_3, w_4, w_5$.
 For each $i\in\{1,\ldots,5\}$, denote the other neighbor of $w_i$ by $x_i$. 
Let $G':=G-\{v,w_1,w_2,w_3,w_4,w_5\}$.  By minimality, $G'$ has an odd
6-coloring $\vph$.  To extend $\vph$ to $G$, we color $v$ to avoid
$\{\vph(x_1),\vph(x_2),\vph(x_3),\vph(x_4),\vph(x_5)\}$.  We then color
each $w_i$ to avoid the new color on $v$, as well as
$\{\vph(x_i),\vph_o(x_i),\vph(v)\}$.  Thus, no 5-vertex in $G$ has five
2-neighbors.

Now we use discharging to reach a contradiction, which will finish the proof.
We give each vertex $v$ initial charge $d(v)$ and use the following single
discharging rule: Each 2-vertex takes $1/2$ from each neighbor.  We show that
each vertex finishes with charge at least 3, which contradicts the hypothesis
$\mad(G)<3$.  Recall that $\delta(G)\ge 2$.

If $d(v)=2$, then $v$ finishes with $2+2(1/2)=3$.
If $d(v)=3$, then $v$ has no 2-neighbors, so $v$ finishes with 3.
If $d(v)=4$, then $v$ has at most two 2-neighbors, so $v$ finishes with at least
$4-2(1/2)=3$.
If $d(v)=5$, then $v$ has at most four 2-neighbors, so $v$ finishes with at
least $5-4(1/2)=3$.
If $d(v)\ge 6$, then $v$ finishes with at least $d(v)-d(v)/2=d(v)/2\ge 3$, since
$d(v)\ge 6$.  This finishes the proof.
\end{proof}

With a bit more analysis, we can prove the following stronger result.

\begin{cor}
\label{6color-cor}
If $\mad(G)\le 3$ and $G$ does not contain $K^*_7$ as a subgraph, then
$\chio(G)\le 6$.
\end{cor}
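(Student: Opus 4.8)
The plan is to argue by contradiction. Suppose $G$ is a counterexample on the fewest vertices, so $\mad(G)\le 3$, $G$ does not contain $K^*_7$ as a subgraph, but $\chio(G)\ge 7$; we may assume $G$ is connected. Every reducible configuration from the proof of Theorem~2(a) remains reducible, so $G$ has no $1$-vertex, no two adjacent $2$-vertices, no $3$-vertex with a $2$-neighbor, no $4$-vertex with at least three $2$-neighbors, and no $5$-vertex with five $2$-neighbors. One clean extra ingredient simplifies everything: a vertex of \emph{odd} degree automatically has some color that appears an odd number of times on its neighborhood, since the color-multiplicities sum to an odd number and so cannot all be even. Consequently only even-degree vertices ever obstruct an extension, and deleting an odd-degree vertex never threatens the oddness of a retained neighbor unless that neighbor itself drops to even degree.

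Next I would run the discharging of Theorem~2(a) unchanged. It shows that every vertex finishes with charge at least $3$; since $2|E(G)|=\sum_{v}d(v)\le 3|V(G)|$, in fact every vertex finishes with charge exactly $3$. Reading off the equality cases pins down $G$: it has maximum degree $6$, every $4$-vertex has exactly two $2$-neighbors, every $5$-vertex has exactly four, every $6$-vertex has exactly six, and (since no $3$-vertex has a $2$-neighbor and there are no adjacent $2$-vertices) every $2$-vertex has both of its neighbors among the $\{4,5,6\}$-vertices.

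Now I would add a handful of additional reducible configurations, made possible by the odd-degree shortcut, to strip away all the $3$-, $4$-, and $5$-vertices. First, a $4$-vertex with an odd-degree non-$2$-neighbor is reducible (re-inserting it costs at most five forbidden colors); in particular no $4$-vertex is adjacent to a $3$-vertex, so a $3$-vertex has only odd-degree neighbors and is itself reducible. Hence $G$ has no $3$-vertices, and the $\{4,5\}$-vertices, joined through their non-$2$-neighbors, form a disjoint union of paths and cycles with $5$-vertices at the path-ends. Peeling a $5$-vertex together with, when necessary, its neighbouring $4$-vertex, and then peeling the resulting cycles of $4$-vertices all at once, shows $G$ has no $4$- or $5$-vertices either; a short argument also rules out two $2$-vertices sharing the same pair of endpoints. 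Thus $G$ is the subdivision of a \emph{simple} $6$-regular graph $M$ whose vertices are the $6$-vertices of $G$, and since $G$ contains no $K^*_7$, the graph $M$ contains no $K_7$.

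The last step is a direct coloring, and this is the main obstacle. Since $M$ is $6$-regular with no $K_7$, Brooks' Theorem gives a proper $6$-coloring $c$ of $M$, which already satisfies the oddness requirement at every $2$-vertex because its two neighbors receive distinct colors; it remains only to color the subdivision vertices. The subdivision vertex of an edge $uv$ must avoid $c(u)$ and $c(v)$, leaving four permissible colors, and a $6$-vertex $u$ fails only if the six colors on its subdivision-neighbors all occur with even multiplicity, i.e.\ their multiset is $\{a^6\}$, $\{a^4,b^2\}$, or $\{a^2,b^2,c^2\}$. The plan is to show these patterns can always be avoided simultaneously; a uniformly random choice is just short of what the Lov\'asz Local Lemma needs (the local failure probability is a bit too large), so one must make a more careful, partly deterministic assignment exploiting the freedom in $c$ and the rigidity of the subdivision. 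Pinning down this final coloring --- and, in tandem, tuning the extra reducible configurations so that the only structure surviving all the reductions really is a subdivided simple $6$-regular $K_7$-free graph --- is where the real work lies; the rest is routine bookkeeping of forbidden-color counts.
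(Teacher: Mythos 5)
Your overall architecture matches the paper's: read off the equality cases of the discharging from Theorem~2(a) under the weaker hypothesis $\mad(G)\le 3$, reduce to a graph whose only vertices are 2-vertices and 6-vertices, contract to a $6$-regular $K_7$-free graph $M$, apply Brooks' Theorem, and extend to the subdivision vertices. The genuine gap is exactly the step you flag as "where the real work lies": you never produce the coloring of the subdivision vertices, and you assert (incorrectly) that it requires delicate probabilistic or semi-deterministic machinery. In fact a short extremal argument finishes it. Among all assignments giving each subdivision vertex $s_{uv}$ a color in $\{1,\dots,6\}\setminus\{c(u),c(v)\}$, choose one maximizing the number of \emph{happy} vertices of $M$, i.e.\ vertices seeing some color an odd number of times on their six subdivision-neighbors. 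If $u$ is unhappy, pick an incident edge $uv$ whose subdivision vertex has color $a$, and consider recoloring $s_{uv}$ to any $b\in\{1,\dots,6\}\setminus\{c(u),c(v),a\}$ (three choices). Each such recoloring makes $u$ happy, since the multiplicity of $a$ at $u$ drops from a positive even number to an odd one, and the only other vertex affected is $v$. The other five subdivision-neighbors of $v$ form a multiset of size $5$, which has an odd number of colors of odd multiplicity; adding $b$ makes all multiplicities at $v$ even only if $b$ is the \emph{unique} odd-multiplicity color, so at most one of the three choices of $b$ can make $v$ unhappy. Hence some recoloring strictly increases the number of happy vertices, contradicting maximality. (Each $s_{uv}$ itself is fine because $c(u)\ne c(v)$.) Without this argument your proof does not reach a contradiction, so the corollary is not established.

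Two smaller points. First, your route to eliminating the $3$-, $4$-, and $5$-vertices (new reducible configurations built on the odd-degree observation, then "peeling" paths and cycles of $4$- and $5$-vertices, plus "a short argument" against parallel subdivided edges) is only gestured at and is more elaborate than needed: once every $4$-vertex has exactly two 2-neighbors, every $5$-vertex exactly four, and every $6$-vertex six, it suffices to show that no two $3^+$-vertices are adjacent (delete both together with their 2-neighbors, color, and extend); then any $3$-, $4$-, or $5$-vertex would have all its neighbors of degree $2$, contradicting those exact counts, so only $2$- and $6$-vertices survive. Second, your inference that $M$ contains no $K_7$ because $G$ contains no $K^*_7$ is correct, but it is worth noting explicitly that a $K_7$ component of the contracted graph forces all $21$ connecting $2$-vertices to be present in $G$, which is precisely a $K^*_7$ subgraph. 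These are repairable presentation issues; the missing extension step is the substantive one.
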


\begin{proof}
In the proof of Theorem~\ref{6color-thm}, to contradict the hypothesis
$\mad(G)<3$, we showed that each vertex finished with charge at least 3.
To contradict the weaker hypothesis $\mad(G)\le 3$, it suffices to show that
also some vertex finishes with charge more than 3.
Suppose the contrary.

Now $G$ has no $7^+$-vertex and each 6-vertex has six 2-neighbors.
Each 5-vertex has exactly four 2-neighbors and each 4-vertex has exactly two
2-neighbors.  Now it is straightforward to check that $G$ cannot have any two
adjacent $3^+$-vertices $v$ and $w$; in each case, we delete $v$, $w$, and their
2-neighbors, color the smaller graph and extend.  Thus, $G$ is bipartite, where
one part consists of 2-vertices and the other consists of 6-vertices.  Form $G'$
from $G$ by contracting one edge incident to each 2-vertex.  Now $G'$
is 6-regular, but no component is $K_7$, so $\chi(G')\le 6$ by Brooks' Theorem.
 A 6-coloring of $G'$ induces a 6-coloring of the 6-vertices in $G$, which we
can easily extend to an odd 6-coloring of $G$.
\end{proof}

\section{Odd 5-Colorings}
\label{5coloring-sec}
The goal of this section is to prove the following result.

\begin{thmTb}
\label{5color-thm}
If $\mad(G)<20/7$, then $\chio(G)\le 5$.  
This includes all planar $G$ of girth~at~least~7.
\end{thmTb}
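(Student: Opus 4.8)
The plan is to follow the discharging template from the proofs of Theorem~\ref{mad4thm} and Theorem~2(a). Let $G$ be a counterexample minimizing $|V(G)|$. We first show that $G$ avoids a short list of reducible configurations, and then reach a contradiction with $\mad(G)<20/7$ by discharging; the second sentence of the theorem follows from the first by Proposition~\ref{girth-prop}, since a planar graph of girth at least $7$ has maximum average degree less than $14/5<20/7$. As in the Remark following Theorem~\ref{mad4thm}, whenever we recolor a deleted vertex we interpret $\vph_o$ relative to the current partial coloring, so that re-inserting an already-colored vertex never destroys a neighbor's odd color. Throughout, ``reducible'' means: if $G$ contained the configuration we could delete a few vertices, apply minimality to odd-$5$-color what remains, and extend.

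The reducible configurations I would use are: (i) $G$ has no $1$-vertex; (ii) $G$ has no two adjacent $2$-vertices; (iii) no $3$-vertex of $G$ has a $2$-neighbor; and (iv) no $4$-vertex of $G$ has three or more $2$-neighbors. In particular, after (i) every vertex has degree at least $2$. Configurations (i) and (ii) are handled exactly as in the proof of Theorem~\ref{mad4thm}: a $1$-vertex need avoid only two colors, and for adjacent $2$-vertices $v_1,v_2$ with outer neighbors $v_0,v_3$ one colors $v_1$ outside $\{\vph(v_0),\vph_o(v_0),\vph(v_3)\}$ and then $v_2$ outside $\{\vph(v_1),\vph(v_0),\vph(v_3),\vph_o(v_3)\}$, each a set of size at most $4$. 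For (iv): given a $4$-vertex $v$ with $2$-neighbors $w_1,w_2,w_3$ whose other endpoints are $x_1,x_2,x_3$ and with fourth neighbor $w_4$, delete $v,w_1,w_2,w_3$, odd-$5$-color the remainder as $\vph$, color $v$ outside $\{\vph(w_4),\vph_o(w_4),\vph(x_1),\vph(x_2),\vph(x_3)\}$, and color each $w_i$ outside $\{\vph(v),\vph(x_i),\vph_o(x_i)\}$; then $v$ automatically gets an odd color since it has three colored neighbors, and each $w_i$ does since $\vph(v)\ne\vph(x_i)$. (If $v$ has \emph{four} $2$-neighbors, the forbidden set for $v$ has size at most $4$, but one must then pick the last $w_i$ from its at least two available colors so that some color occurs an odd number of times on $N(v)$.) Configuration (iii) is treated similarly, deleting the $3$-vertex $v$ together with its $2$-neighbor $w_1$.

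The step I expect to be the main obstacle is making (iii) and (iv) actually go through. Unlike in Theorem~2(a), where a sixth color always supplies a free slot, with only $5$ colors the extensions above fail exactly when the relevant forbidden set happens to have size $5$: for (iv) when $\vph(w_4),\vph_o(w_4),\vph(x_1),\vph(x_2),\vph(x_3)$ are all distinct, and for (iii) when the one color available for $v$ is forced to equal $\vph(x_1)$ and thereby ruins $w_1$'s odd color. I would first dispose of the easy sub-cases -- coincidences among $x_1,x_2,x_3$ and $w_4$, or a relevant $2$-vertex lying on a triangle or a $4$-cycle -- each of which shrinks the forbidden set below $5$; and then, in the remaining ``spread-out'' case, recolor one outer vertex (the other endpoint of one of the $2$-neighbors, which retains some freedom precisely because it is incident to a $2$-vertex), or argue that the stuck configuration forces a different reducible configuration. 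The delicate point in any such recoloring is that changing a vertex's color can flip, at each of its neighbors, the parity of that color's multiplicity, so one must verify that those neighbors still see some color an odd number of times -- the same subtlety flagged in the Remark.

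Finally, the discharging. Assign each vertex $v$ the charge $d(v)$, and let each $2$-vertex take $\frac{3}{7}$ from each of its neighbors; by (ii) and (iii) those neighbors are all $4^+$-vertices. Then a $2$-vertex finishes with $2+2\cdot\frac{3}{7}=\frac{20}{7}$; a $3$-vertex keeps $3>\frac{20}{7}$; a $4$-vertex, having at most two $2$-neighbors by (iv), finishes with at least $4-\frac{6}{7}=\frac{22}{7}$; a $5$-vertex finishes with at least $5-\frac{15}{7}=\frac{20}{7}$; and a $k$-vertex with $k\ge6$ finishes with at least $\frac{4}{7}k\ge\frac{24}{7}$. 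The total final charge equals $2|E(G)|$ and each final charge is at least $\frac{20}{7}$, so $2|E(G)|\ge\frac{20}{7}|V(G)|$; hence the average degree of $G$ is at least $\frac{20}{7}$, so $\mad(G)\ge\frac{20}{7}$, a contradiction. Note that $2$-vertices and $5$-vertices finish at exactly $\frac{20}{7}$, so there is no slack and configurations (iii) and (iv) must be established in the sharp form above; the value $5$ cannot be lowered, since $\chio(C_5)=5$ by Proposition~\ref{cycle-prop} and $\chio(K^*_5)=5$ with $\mad(K^*_5)=8/3<\frac{20}{7}$ by Lemma~\ref{mad4lem}. With minor changes, the argument also yields the odd list-coloring and correspondence-coloring analogues.
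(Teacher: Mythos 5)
Your overall template (minimal counterexample, reducible configurations, single-rule discharging) is the right genre, but the proof as written has a genuine gap exactly where you predicted it would: configurations (iii) and (iv) are not established, and your discharging cannot survive without them. With only rule (R1), a $3$-vertex with one $2$-neighbor finishes with $3-3/7=18/7<20/7$ and a $4$-vertex with three $2$-neighbors finishes with $4-9/7=19/7<20/7$, so you genuinely need the strong claims ``no $3$-vertex has a $2$-neighbor'' and ``no $4$-vertex has three $2$-neighbors.'' But the direct deletion-and-extension argument for each of these produces a forbidden set of size up to $5$ (for the $3$-vertex: $\vph(w_2),\vph(w_3),\vph_o(w_2),\vph_o(w_3),\vph(x_1)$; for the $4$-vertex: $\vph(w_4),\vph_o(w_4),\vph(x_1),\vph(x_2),\vph(x_3)$), which exhausts all $5$ colors. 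Your proposed escape --- recoloring an outer vertex such as $x_1$ --- is not carried out, and it is not a routine fix: changing $\vph(x_1)$ can destroy the odd color at every neighbor of $x_1$, precisely the cascading problem the Remark warns about. So the proof does not close.

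The paper sidesteps this entirely by proving only the \emph{weaker}, actually-reducible facts --- each $3$-vertex has at most one $2$-neighbor (indeed at most one $3^-$-neighbor), no $4$-vertex has four $2$-neighbors, no $4$-vertex with a $2$-neighbor has all $3^-$-neighbors, and no two adjacent $4$-vertices each have three $2$-neighbors (in each of these the forbidden set has size at most $4$) --- and then compensating in the discharging with a second rule: a $3$-vertex with a $2$-neighbor, and a $4$-vertex with three $2$-neighbors, each takes $1/7$ from every $3^+$-neighbor. The extra structural facts guarantee that these needy vertices have enough $4^+$-neighbors to collect from and that the donors still end at $20/7$. If you want to salvage your one-rule version, you would have to genuinely prove the strong reducibility claims, which the paper's author evidently did not find a way to do; the two-rule discharging is the standard and correct workaround.
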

\begin{proof}
The second statement follows from the first by Proposition~\ref{girth-prop}.
So we prove the first.
Assume the statement is false, and $G$ is a counterexample with as few
vertices as possible.  As in the proof of Theorem~\ref{mad4thm}, we assume 
$G$ has no 1-vertex and $G$ has no adjacent 2-vertices.
Suppose $G$ has a 3-vertex $v$ with neighbors $w_1,w_2,w_3$, where
$d(w_1)=d(w_2)=2$.  Denote the other neighbors of $w_1$ and $w_2$ by,
respectively, $x_1$ and $x_2$.  Let $G':=G-\{v,w_1,w_2\}$.
 By minimality, $G'$ has an odd 5-coloring $\vph$.  To extend $\vph$ to $G$,
we color $v$ to avoid $\{\vph(w_1), \vph(w_2), \vph(w_3), 
\vph_o(w_3)\}$ and color each $w_i$ to avoid the new
color on $v$ and avoid $\{\vph(x_i),\vph_o(x_i),\vph(w_3)\}$.  Thus, each
3-vertex has at most one 2-neighbor.  More generally, each 3-vertex has at most
one $3^-$-neighbor.  If not, then we delete $v$ and any 2-neighbors.  We color $v$
to avoid the color on each 3-neighbor, the color on the colored neighbor of each
(deleted) 2-neighbor, as well as $\{\vph(w),\vph_o(w)\}$, where $w$ is the third
neighbor of $v$.  Thus, each 3-vertex has at most one $3^-$-neighbor.

Suppose $G$ has a 4-vertex with four 2-neighbors $w_1,w_2,w_3,w_4$.
For each $i\in \{1,2,3,4\}$, denote the other neighbor of
$w_i$ by $x_i$.  Let $G':=G-\{v,w_1,w_2,w_3,w_4\}$.  By minimality, $G'$ has an odd
5-coloring $\vph$.  
To extend $\vph$ to $G$, color $v$ to avoid
$\{\vph(x_1), \vph(x_2), \vph(x_3)$, $\vph(x_4)\}$ and then
color each $w_i$, in succession, to avoid the new color on $v$ and also avoid
$\{\vph(x_i)$, $\vph_o(x_i), \vph_o(v)\}$.  Thus, no 4-vertex in $G$ has
four 2-neighbors.  Similarly, if each $w_i$ has degree at most 3 and at least
one $w_i$ is a 2-vertex, then we form
$G'$ from $G$ by deleting $v$ and all its 2-neighbors.  Again $G'$ has an odd
5-coloring $\vph$.  We extend $\vph$ to $v$ by avoiding the color on each
3-neighbor and the color on the other neighbor of each 2-neighbor.  Finally, we
can extend the coloring to all 2-neighbors to get an odd 5-coloring of $G$.
Thus, $G$ has no 4-vertex with at least one 2-neighbor and all
$3^-$-neighbors.  Similarly, $G$ does not contain adjacent 4-vertices, $v$ and
$w$, each with three 2-neighbors.  If so, then $G-(N[v]\cup N[w])$ has
an odd 5-coloring.  We color $v$ and $w$ with distinct colors that each differ
from the colors on the three colored vertices at distance two in $G$.  Again,
we can extend this coloring to an odd 5-coloring of $G$.

Now we use discharging to reach a contradiction, which will finish the proof.
We give each vertex $v$ initial charge $d(v)$ and use the following two
discharging rules. 
\begin{enumerate}
\item[(R1)] Each 2-vertex takes $3/7$ from each neighbor.  
\item[(R2)] Each 3-vertex with a 2-neighbor and each 4-vertex with three
2-neighbors takes $1/7$ from each $3^+$-neighbor.  
\end{enumerate}

We show that
each vertex finishes with charge at least 20/7, which contradicts the hypothesis
$\mad(G)<20/7$.  Recall that $\delta(G)\ge 2$.
If $d(v)=2$, then $v$ finishes with $2+2(3/7)=20/7$.
If $d(v)=3$, then $v$ has at most one 2-neighbor.  Further, $v$ does not give
away charge by (R2), since no 3-vertex has both a 2-neighbor and a 3-neighbor,
and also no 4-vertex has both three 2-neighbors and a 3-neighbor.  So, if $v$ has a
2-neighbor, then it has two $4^+$-neighbors and receives charge $1/7$ from each.
Thus, $v$ finishes with at least $3-3/7+2(1/7)=20/7$.  If $v$ has no 2-neighbor,
then $v$ starts and finishes with 3.

Let $v$ be a 4-vertex, and recall that $v$ has at most three 2-neighbors.
If $v$ has at most two 2-neighbors, then $v$ finishes with at least
$4-2(3/7)-2(1/7)=20/7$.
If $v$ has three 2-neighbors, then its fourth neighbor is a $4^+$-neighbor that
does not receive charge from $v$ by (R2) but rather gives $v$ charge $1/7$ by (R2). 
So $v$ finishes with $4-3(3/7)+1/7=20/7$.

If $v$ is a $5^+$-vertex, then $v$ finishes with at least
$d(v)-3d(v)/7=4d(v)/7\ge 20/7$, since $d(v)\ge 5$.
This finishes the proof.
\end{proof}

With a bit more analysis, we can prove the following stronger result.
The proof is similar to that of Corollary~\ref{6color-cor}, so we just provide
a sketch.

\begin{cor}
\label{5color-cor}
If $\mad(G)\le 20/7$ and $G$ does not contain $K^*_6$ as a subgraph, then
$\chio(G)\le 5$.
\end{cor}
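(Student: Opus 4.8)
Following the proof of Corollary~\ref{6color-cor}, the plan is to take a counterexample $G$ minimizing $|V(G)|$; so $\mad(G)\le 20/7$, $G$ does not contain $K^*_6$ as a subgraph, and $\chio(G)\ge 6$. Every structural conclusion from the proof of Theorem~\ref{5color-thm} still holds, since each of those reducibility arguments used only the minimality of $G$ and $\chio(G)\ge 6$, not the strict inequality $\mad(G)<20/7$. Now I would run the same discharging. It was shown there that every vertex finishes with charge at least $20/7$; if some vertex finishes with more than $20/7$, then $\mad(G)>20/7$, a contradiction. So we may assume every vertex finishes with \emph{exactly} $20/7$, and read off the resulting tight structure: $G$ has no $6^+$-vertex (one would finish with at least $4d(v)/7\ge 24/7$); every $3$-vertex has exactly one $2$-neighbor (a $3$-vertex with no $2$-neighbor keeps charge $3>20/7$), hence two $4^+$-neighbors; every $4$-vertex has exactly two or three $2$-neighbors; and every $5$-vertex has five $2$-neighbors, so in particular every neighbor of a $5$-vertex is a $2$-vertex.

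Next I would eliminate $3$-vertices and then $4$-vertices. A $3$-vertex $v$ has two $4^+$-neighbors, and neither can be a $5$-vertex (all of whose neighbors are $2$-vertices), so $v$ is adjacent to some $4$-vertex $u$. Deleting $v$, $u$, and all of their $2$-neighbors, odd-$5$-coloring the remaining graph by minimality, and extending --- exactly as in the deletion/extension arguments of Theorem~\ref{5color-thm} (color the high-degree ``hub'' first, chosen so as to protect the odd colors $\vph_o$ of the surviving far neighbors, and then color the re-added $2$-vertices one at a time) --- contradicts minimality. Hence $G$ has no $3$-vertex. Now a $4$-vertex $u$ has two or three $2$-neighbors and its remaining neighbors are $3^+$-vertices, hence (no $3$-vertices, and no $5$-vertex is adjacent to $u$) $4$-vertices; so $u$ is adjacent to a $4$-vertex $u'$. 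By the structural facts carried over from Theorem~\ref{5color-thm}, $u$ and $u'$ are not both $4$-vertices with three $2$-neighbors, so one of them, say $u$, has exactly two $2$-neighbors, and then $u$'s two non-$2$-neighbors are $4$-vertices; another deletion/extension argument of the same type rules this configuration out. Hence $G$ has no $4$-vertex.

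Therefore $G$ has only $2$-vertices and $5$-vertices, each $5$-vertex having five $2$-neighbors and each $2$-vertex exactly two $5$-neighbors. Form $G'$ from $G$ by contracting one edge at each $2$-vertex; then $G'$ is $5$-regular (possibly with parallel edges), and any component of $G'$ that is $K_6$ would pull back to a copy of $K^*_6$ in $G$, so no component of $G'$ is $K_6$; being $5$-regular, no component is an odd cycle, so Brooks' Theorem gives $\chi(G')\le 5$. Color each $5$-vertex of $G$ with its color in $G'$, and color each $2$-vertex with a color distinct from those of its two $5$-neighbors, which are adjacent in $G'$ and hence differently colored. This coloring is proper, and it is odd: every $2$-vertex sees two distinct colors on its neighborhood, while every $5$-vertex has five neighbors, so --- five being odd --- some color appears an odd number of times among them regardless of how the $2$-vertices were colored. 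Thus $\chio(G)\le 5$, contradicting the choice of $G$. The main obstacle is the two deletion/extension steps that eliminate $3$-vertices and adjacent $4$-vertices: the deleted neighborhoods can overlap (a $2$-vertex may be shared, or a ``far'' neighbor may coincide with another vertex of the configuration), and the number of colors a vertex must avoid is tight against the five available, so --- exactly as in the proof of Theorem~\ref{5color-thm} --- one must order the colorings carefully and choose with care which colors to forbid at each step.
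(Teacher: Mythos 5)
Your proposal is correct and follows essentially the same route as the paper's own proof sketch: extract the tight structure from the discharging (no $6^+$-vertices, 5-vertices with five 2-neighbors, 3-vertices with exactly one 2-neighbor, 4-vertices with two or three 2-neighbors), eliminate the 3- and 4-vertices by further deletion/extension reductions, and finish by contracting an edge at each 2-vertex and applying Brooks' Theorem, using the absence of $K_6^*$ to rule out a $K_6$ component. The only difference is cosmetic: the paper removes 4-vertices first (after which 3-vertices are impossible with no further reduction), whereas you reduce 3-vertices first and then 4-vertices, at the cost of one extra reducible configuration; your level of detail on the extension steps matches the paper's, which itself only offers a proof sketch.
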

\begin{proof}[Proof Sketch.]
Assume $G$ is a counterexample.  So $G$ has no $6^+$-vertices and each 5-vertex
in $G$ has five 2-neighbors.  Each 3-vertex has a single 2-neighbor.
Each 4-vertex either has exactly three 2-neighbors or has exactly two
2-neighbors and gives charge 1/7 to each of it $3^+$-neighbors.  In this case,
we delete $v$, its 4-neighbors and the 2-neighbors of all deleted 4-vertices.
This smaller graph has an odd 5-coloring $\vph$, and it is straightforward to
check that we can extend $\vph$ to $G$.  So $G$ has no 4-vertices.
This implies, in turn, that $G$ has no 3-vertices, since $G$ has no 3-vertex
with two $3^-$-neighbors.  So $G$ is bipartite with vertices in one part of
degree 2 and those in the other part of degree 5.  We contract one edge incident
to each 2-vertex, and 5-color the resulting graph by Brooks' Theorem.
Finally, we extend this 5-coloring to an odd 5-coloring of $G$.
\end{proof}

\bibliographystyle{habbrv}
{{\bibliography{references}}

\begin{thebibliography}{10}
\expandafter\ifx\csname url\endcsname\relax
  \def\url#1{\texttt{#1}}\fi
\expandafter\ifx\csname doi\endcsname\relax
  \def\doi#1{\burlalt{doi:#1}{http://dx.doi.org/#1}}\fi
\expandafter\ifx\csname urlprefix\endcsname\relax\def\urlprefix{ }\fi
\expandafter\ifx\csname href\endcsname\relax
  \def\href#1#2{#2}\fi
\expandafter\ifx\csname burlalt\endcsname\relax
  \def\burlalt#1#2{\href{#2}{#1}}\fi

\bibitem{BMWW1}
D.~P. Bunde, K.~Milans, D.~B. West, and H.~Wu.
\newblock Parity and strong parity edge-coloring of graphs.
\newblock In {\em Proceedings of the {T}hirty-{E}ighth {S}outheastern
  {I}nternational {C}onference on {C}ombinatorics, {G}raph {T}heory and
  {C}omputing}, volume 187, pages 193--213, 2007.

\bibitem{BMWW2}
D.~P. Bunde, K.~Milans, D.~B. West, and H.~Wu.
\newblock Optimal strong parity edge-coloring of complete graphs.
\newblock {\em Combinatorica}, 28(6):625--632, 2008.

\bibitem{cheilaris-diss}
P.~Cheilaris.
\newblock {\em Conflict-Free Coloring}.
\newblock 2009.
\newblock Thesis (Ph.D.)--City University of New York.

\bibitem{CKP}
P.~Cheilaris, B.~Keszegh, and D.~P\'{a}lv\"{o}lgyi.
\newblock Unique-maximum and conflict-free coloring for hypergraphs and tree
  graphs.
\newblock {\em SIAM J. Discrete Math.}, 27(4):1775--1787, 2013.

\bibitem{CT}
P.~Cheilaris and G.~T\'{o}th.
\newblock Graph unique-maximum and conflict-free colorings.
\newblock {\em J. Discrete Algorithms}, 9(3):241--251, 2011.

\bibitem{ELRS}
G.~Even, Z.~Lotker, D.~Ron, and S.~Smorodinsky.
\newblock Conflict-free colorings of simple geometric regions with applications
  to frequency assignment in cellular networks.
\newblock {\em SIAM J. Comput.}, 33(1):94--136, 2003.

\bibitem{FG}
I.~Fabrici and F.~G\"{o}ring.
\newblock Unique-maximum coloring of plane graphs.
\newblock {\em Discuss. Math. Graph Theory}, 36(1):95--102, 2016.

\bibitem{GST}
R.~Glebov, T.~Szab\'{o}, and G.~Tardos.
\newblock Conflict-free colouring of graphs.
\newblock {\em Combin. Probab. Comput.}, 23(3):434--448, 2014.

\bibitem{KKL}
A.~Kostochka, M.~Kumbhat, and T.~{\L}uczak.
\newblock Conflict-free colourings of uniform hypergraphs with few edges.
\newblock {\em Combin. Probab. Comput.}, 21(4):611--622, 2012.

\bibitem{KY1}
A.~Kostochka and M.~Yancey.
\newblock Ore's conjecture on color-critical graphs is almost true.
\newblock {\em J. Combin. Theory Ser. B}, 109:73--101, 2014.

\bibitem{KY2}
A.~Kostochka and M.~Yancey.
\newblock A {B}rooks-type result for sparse critical graphs.
\newblock {\em Combinatorica}, 38(4):887--934, 2018.

\bibitem{PT}
J.~Pach and G.~Tardos.
\newblock Conflict-free colourings of graphs and hypergraphs.
\newblock {\em Combin. Probab. Comput.}, 18(5):819--834, 2009.

\bibitem{PS}
M.~Petru\v{s}evski and R.~\v{S}krekovski.
\newblock Colorings with neighborhood parity condition.
\newblock Dec. 2021, \burlalt{\tt
  arXiv:2112.13710}{http://arxiv.org/abs/2112.13710}.

\bibitem{smorodinsky}
S.~Smorodinsky.
\newblock Conflict-free coloring and its applications.
\newblock In {\em Geometry---intuitive, discrete, and convex}, volume~24 of
  {\em Bolyai Soc. Math. Stud.}, pages 331--389. J\'{a}nos Bolyai Math. Soc.,
  Budapest, 2013.

\end{thebibliography}

\end{document}